\newcommand \datum {Dec.\ 27, 2021}
\numberwithin{equation}{section}
\theoremstyle{plain}
 \newtheorem{theorem}{Theorem}[section]
 \newtheorem{lemma}[theorem]{Lemma}
 \newtheorem{proposition}[theorem]{Proposition}
 \newtheorem{corollary}[theorem]{Corollary}
 \newtheorem{observation}[theorem]{Observation}
\theoremstyle{definition}
 \newtheorem{example}[theorem]{Example}
 \newtheorem{remark}[theorem]{Remark}
\theoremstyle{remark}
 \newtheorem{case}{Case}
\newcommand \Con[1] {\textup{Con}(#1)}
\newcommand \Quo[1] {\textup{Quo}(#1)}
\newcommand \RCon[1] {\textup{RCon}(#1)}
\newcommand \chain [1] {\mathsf C_{#1}}
\newcommand \Nplu {\mathbb N^+}
\newcommand \Nfrom [1] {\mathbb N^{\geq #1}}
\newcommand \Ret[1] {\textup{Ret}(#1)}
\newcommand \restrict [2] {#1\rceil_{\kern -1pt #2}}
\newcommand \sts [1] {\textup{StrS}(#1)}
\newcommand \isc [1] {\textup{ISkC}(#1)}
\newcommand \id [1]  {\textup{id}_{#1}}
\newcommand \abp {\textup{AP}}
\newcommand \abul {A^\bullet}
\newcommand \xstar {X^\star}
\newcommand \eabp {\textup{AP}^+}
\newcommand \set [1]{\{#1\}}
\newcommand \ideal [1] {\mathord{\downarrow}#1}
\newcommand \con   {\textup{con}}
\newcommand \tbf[1]  {\textbf{#1}} 
\newcommand \defiff{\overset{\text{def}}\iff}
\newcommand\red[1]{{\textcolor{red}{#1}}}
\newcommand\blue[1]{{\textcolor{blue}{#1}}}
\newcommand \mgreen [1] {{\color{red!70!green}#1\color{black}}}
\begin{document}
\title[Retracts of rectangular distributive lattices]
{Retracts of rectangular distributive lattices and some related observations}

\author[G.\ Cz\'edli]{G\'abor Cz\'edli}
\email{czedli@math.u-szeged.hu \qquad Address$:$~University of Szeged, Hungary}
\urladdr{http://www.math.u-szeged.hu/~czedli/}

\begin{abstract} 
By a rectangular distributive lattice we mean the direct product of two non-singleton finite chains. We prove that the retracts (ordered by set inclusion and together with the empty set) of  a rectangular distributive lattice $G$ form a lattice, which we denote by $\Ret G$. Also, we describe and count the  retracts  of $G$. Some easy properties of retracts, retractions, and retraction kernels  of (mainly distributive) lattices are observed and several examples are presented, including a 12-element modular lattice $M$ such that $\Ret M$ is not a lattice. 
\end{abstract}

\dedicatory{Dedicated to the memory of my scientific advisor, Andr\'as P. Huhn (1947--1985)}

\thanks{This research was supported by the National Research, Development and Innovation Fund of Hungary under funding scheme K 134851.}

\subjclass {06B99, 06D99}

\keywords{Retract, retraction, distributive lattice, absorption property, sublattice, retraction congruence, projective lattice}

\date{\datum.\hfill{\red{Check the author's website and, later, MathSciNet and Zentralblatt for updates}}}

\[\phantom{mmmmmmmmimmm}
\parbox{9cm}
{\mgreen{Compared to the December 23, 2021 version, \blue{\tbf{only}} some  typos have been corrected; the changes are in \red{\tbf{red}}.\\ }}
\]

\maketitle

\section{Introduction}\label{sect:intro} 
This paper is motivated by \cite{balbes}--\cite{schmid}; in fact, mainly by Jakub\-\'{\i}k\-ov\'a--Studenovsk\'a and P\'ocs \cite{danicap}. The titles of these thirteen
papers speak for themselves.
In spite of these sources, we did not know what the retracts of a non-chain lattice $L$ are and, except for some special lattices discussed in \cite{czg-retrsps}, we did not know any interesting properties of the retracts of $L$. Although we still do not know much, our goal with this paper  is to prove that if $L$ is the direct product of two finite chains, then 
its retracts together with the empty set form a lattice $\Ret L=(\Ret L,\subseteq)$. We describe the structure of $\Ret L$ and determine its size, $|\Ret L|$. Some  easy properties of retracts, retractions, and retraction kernels  of (mainly distributive) lattices are observed and several examples are presented. For example, we give a 12-element modular lattice $M$ such that $\Ret M$ is not a lattice. 
Even if we do not formulate them explicitly, this paper raises some open or ``not-yet-studied'' problems. 

The \emph{retractions} of a lattice $L$ are its idempotent endomorphisms; so they are lattice homomorphism $f\colon L\to L$ such that  $f(f(x))=f(x)$ for all $x\in L$. A sublattice of $L$ is a \emph{retract} of $L$ if it is of the form $f(L):=\set{f(x):x\in L}$ for some retraction $f$ of $L$.

\section{Retraction congruences of algebras with a majority term}
If  $\rho_1$ and $\rho_2$ are relations of algebras $A_1$ and $A_2$, respectively, then $\rho_1\times\rho_2$ is defined to be 
\[
\text{$\set{((x_1,x_2),(y_1,y_2)): (x_1,y_1)\in\rho_1 \text{ and } (x_2,y_2)\in\rho_2}$.}
\]
A \emph{majority term} for a variety $\mathcal V$ of algebras is a ternary term $m(x,y,z)$ 
such that $\mathcal V$ satisfies the identities $m(x,x,y)=m(x,y,x)=m(y,x,x)=x$. The variety of all lattices has majority terms since, say,  $m(x,y,z):=(x\vee y)\wedge (x\vee z)\wedge (y\vee z)$ is such a term. By a \emph{quasiorder} (also known as \emph{preorder}) we mean a reflexive and transitive relation. The set (in fact, the lattice) of congruences 
and that of compatible quasiorders of an algebra $A$ are denoted by $\Con A$ and $\Quo A$, respectively. 
Given a retraction $f\colon L\to L$ of a lattice $L$, $f(L)$ and $\ker(f):=\set{(x,y)\in L^2: f(x)=f(y)}$ are the \emph{retract} and
the \emph{retraction congruence}  associated with $f$. A \emph{retract} and a \emph{retraction congruence} is the retract and the retraction congruence  associated with some retraction $f$.
For lattices $L_1$ and $L_2$, every retraction congruence $\Theta$ of $L_1\times L_2$ is of the form $\Theta_1\times \Theta_2$ with $\Theta_i$ being a congruence of $L_i$ for $i=1,2$ since lattices satisfy the Fraser--Horn property. However, we can say a little bit more. Part (B) of the lemma below is due to Fraser and Horn~\cite[Corollary 1]{fraserhorn}.

\begin{lemma}\label{lemma:FH} If $A_1$ and $A_2$ are algebras in a variety with a majority term, then the following two assertions hold.

\textup{(A)} $\Quo{A_1\times A_2}=\set{\rho_1\times \rho_2: \rho_1\in \Quo {A_1}\text{ and }\rho_2\in\Quo{A_2} }$.

\textup{(B)}(Fraser and Horn~\cite[Corollary 1]{fraserhorn} combined with J\'onsson \cite[Example 1]{jonsson}) 
$\Con{A_1\times A_2}=\set{\Theta_1\times \Theta_2: \Theta_1\in \Con{A_1}\text{ and }\Theta_2\in\Con{A_2} }$.
\end{lemma}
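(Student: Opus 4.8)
The plan is to establish (A) directly from the presence of the majority term $m$, and then to read off (B) from (A) via the observation that a congruence is exactly a symmetric compatible quasiorder. Throughout, $\pi_i\colon A_1\times A_2\to A_i$ denotes the $i$-th projection.

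The inclusion ``$\supseteq$'' in (A) is routine and needs no assumption on the variety: for $\rho_i\in\Quo{A_i}$, the relation $\rho_1\times\rho_2$ is reflexive and transitive because reflexivity and transitivity can be checked coordinatewise, and it is compatible because the operations of $A_1\times A_2$ act coordinatewise.

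For ``$\subseteq$'', fix $\rho\in\Quo{A_1\times A_2}$ and set $\rho_i:=(\pi_i\times\pi_i)(\rho)$, so that $(u,v)\in\rho_i$ iff some member of $\rho$ has $u$ and $v$ as its $i$-th coordinates. From these definitions $\rho\subseteq\rho_1\times\rho_2$ is immediate, so the work — and the only point where the majority term is used — is the reverse inclusion. I would first prove the sublemma: \emph{if $(x_1,y_1)\in\rho_1$, then $\bigl((x_1,w),(y_1,w)\bigr)\in\rho$ for every $w\in A_2$}, and symmetrically with the two coordinates interchanged. To see this, choose $a,b\in A_2$ with $\bigl((x_1,a),(y_1,b)\bigr)\in\rho$ and apply the term operation $m$ of the product algebra to the three $\rho$-pairs
\[
\bigl((x_1,a),(y_1,b)\bigr),\qquad \bigl((x_1,w),(x_1,w)\bigr),\qquad \bigl((y_1,w),(y_1,w)\bigr),
\]
the last two of which lie in $\rho$ by reflexivity; since $\rho$ is compatible, the outcome lies in $\rho$, and evaluating it coordinatewise via the identities $m(s,s,t)=m(s,t,s)=m(t,s,s)=s$ gives exactly $\bigl((x_1,w),(y_1,w)\bigr)$. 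Granting the sublemma, for $(x_1,y_1)\in\rho_1$ and $(x_2,y_2)\in\rho_2$ we get $\bigl((x_1,x_2),(y_1,x_2)\bigr)\in\rho$ and $\bigl((y_1,x_2),(y_1,y_2)\bigr)\in\rho$, and transitivity of $\rho$ then yields $\bigl((x_1,x_2),(y_1,y_2)\bigr)\in\rho$; this is $\rho_1\times\rho_2\subseteq\rho$. Thus $\rho=\rho_1\times\rho_2$, and since $\rho$ is a compatible quasiorder and the two factors are nonempty, each $\rho_i$ is a compatible quasiorder. This proves (A).

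For (B), note that an equivalence relation is precisely a symmetric quasiorder, so $\Con{A_i}$ is the set of symmetric members of $\Quo{A_i}$, while $\rho_1\times\rho_2$ is symmetric if and only if both $\rho_1$ and $\rho_2$ are symmetric (the ``only if'' again uses that the other factor is nonempty and reflexive). Restricting the equality from (A) to symmetric relations therefore gives (B), recovering the Fraser--Horn statement quoted above. The step I expect to demand the most care is the sublemma: one has to pick the three auxiliary $\rho$-pairs so that $m$ collapses them precisely to the pair $\bigl((x_1,w),(y_1,w)\bigr)$ — once the right choice is made, the majority identities make the verification mechanical.
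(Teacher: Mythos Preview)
Your proof is correct and follows essentially the same route as the paper: define the coordinate relations $\rho_i$, use the majority term to prove the ``shifting'' sublemma $\bigl((x_1,w),(y_1,w)\bigr)\in\rho$, and then combine it with transitivity to obtain $\rho=\rho_1\times\rho_2$; part (B) is deduced from (A) by restricting to symmetric relations. The only cosmetic difference is that you define $\rho_i$ as the full projection $(\pi_i\times\pi_i)(\rho)$, whereas the paper takes the a~priori smaller relation $\rho_1=\{(x,y):\exists z\,((x,z),(y,z))\in\rho\}$; your choice makes $\rho\subseteq\rho_1\times\rho_2$ immediate (so the paper's separate majority-term computation for that inclusion is absorbed into your sublemma), but after the sublemma both definitions coincide and the remaining steps are identical.
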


Note that if an algebra $A$ has a majority term, then $\Quo A$ is a distributive lattice by \cite[Corollary 5.2]{czglenkeh}. For a lattice $L$, $\Quo L $ was  described by \cite{czghuhnszabo}, the validity of this description was proved (more economically) again in \cite{czgszabo}, and thoroughly surveyed by Davey~\cite{davey}.

\begin{proof}[Proof of Lemma \ref{lemma:FH}] Observe that in the argument for (A) below, symmetry would  trivially be preserved. 
The ``$\supseteq$'' inclusion in place of the equality ``='' in (A) is trivial. To prove the converse inclusion, let $A:=A_1\times A_2$ and $\rho\in\Quo A$. Define $\rho_1:=\set{(x,y)\in A_1^2: (\exists z\in A_2)\,((x,z),(y,z))\in\rho$}. We claim that 
\begin{equation}
\text{if $(x,y)\in\rho_1$, then for all $t\in A_2$, $((x,t),(y,t))\in\rho$.}
\label{eq:mTKrh}
\end{equation}
To see this, assume that $(x,y)\in\rho_1$ is witnessed by $((x,z),(y,z))\in\rho$. Let $t\in A_2$, and let $m$ be a majority term in the variety containing $A_1$ and $A_2$.  Since $\rho$ is reflexive,
$((x,t),(x,t))\in\rho$ and $((y,t),(y,t))\in\rho$. Since $\rho$ is closed with respect to $m$, we obtain that 
$((x,t),(y,t))
=
( (m(x,x,y),m(z,t,t)), \, ( m(y,x,y), m(z,t,t) ) )
=
m( ((x,z),(y,z)) , ((x,t),(x,t)),  ((y,t),(y,t)))
\in\rho$, proving \eqref{eq:mTKrh}. 

Clearly, $\rho_1$ is reflexive. Its compatibility and transitivity follows trivially by  \eqref{eq:mTKrh}, which allows to use the \emph{same} element $z\in A_2$ witnessing that several pairs belong to $\rho_1$. Hence, $\rho_1\in\Quo{A_1}$. By symmetry, the analogously defined $\rho_2$ belongs to $\Quo{A_2}$. 
Next, we show that, for any $x_1,x_2\in A_1$ and $y_1,y_2\in A_2$,  
\begin{equation}
((x_1,x_2), (y_1,y_2))\in\rho \iff \bigl( (x_1,y_1)\in\rho_1\text{ and }(x_2,y_2)\in\rho_2 \bigr).
\label{eq:sWHbrnD}
\end{equation}
Assume that 
$((x_1,x_2), (y_1,y_2))\in\rho$. By reflexivity,
$((x_1,y_2),(x_1,y_2))\in\rho$ and $((y_1,y_2), (y_1,y_2))\in\rho$. 
Hence,
\begin{multline*}
\red{\Bigl(}(x_1,y_2), (y_1,y_2)\red{\Bigr)}
=
\red{\Bigl(}(m(x_1,x_1,y_1), m(x_2,y_2,y_2))  , (m(y_1,x_1,y_1), m(y_2,y_2,y_2)  )\red{\Bigr)}\cr
=
m\bigl( 
((x_1,x_2),(y_1,y_2)), 
((x_1,y_2),(x_1,y_2)) , 
((y_1,y_2),(y_1,y_2)  \bigr)\in\rho,
\end{multline*}
implying that $(x_1,y_1)\in\rho_1$. We obtain similarly that $(x_2,y_2)\in\rho_2$. Thus, the ``$\Rightarrow$'' part of \eqref{eq:sWHbrnD} holds.  
Conversely, assume that $ (x_1,y_1)\in\rho_1$ and $(x_2,y_2)\in\rho_2$. Using \eqref{eq:mTKrh} and its counterpart for the other component, we obtain  that $ ((x_1,x_2),(y_1,x_2))\in\rho$
and  $((y_1,x_2),(y_1,y_2))\in\rho$. Thus, the transitivity of $\rho$ yields that $((x_1,x_2), (y_1,y_2))$ belongs to $\rho$, completing the argument for \eqref{eq:sWHbrnD}. 

Since  $\rho=\rho_1\times \rho_2$  by \eqref{eq:sWHbrnD}, we have proved part (A) of the lemma. Part (B) follows from the first sentence of the proof.
\end{proof}

For an algebra $A$, let $\RCon A$ denote the \emph{set of retraction congruences} of $A$. That is, $\RCon A$ consists of the kernels of retractions of $A$. 
The goal of this section is to prove the following counterpart of Lemma~\ref{lemma:FH}; lattices belong to its scope.

\begin{proposition}\label{prop:wFwzrvTzn}
If $A_1$ and $A_2$ are algebras in a variety $\mathcal V$ with a majority term and each of $A_1$ and $A_2$ has a singleton subalgebra, then  $\RCon{A_1\times A_2}=\set{\Psi_1\times \Psi_2: \Psi_1\in \RCon {A_1}\text{ and }\Psi_2\in\RCon{A_2} }$.
\end{proposition}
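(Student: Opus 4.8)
The plan is to prove the two inclusions separately, using Lemma~\ref{lemma:FH}(B) to reduce everything to products of congruences.

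\medskip
\textbf{The ``$\supseteq$'' inclusion.} First I would show that a product of retraction congruences is a retraction congruence. Suppose $\Psi_i\in\RCon{A_i}$, witnessed by a retraction $f_i\colon A_i\to A_i$ with $\ker f_i=\Psi_i$, for $i=1,2$. Then $f:=f_1\times f_2$, defined by $f(x_1,x_2)=(f_1(x_1),f_2(x_2))$, is an endomorphism of $A_1\times A_2$ (componentwise operations), it is idempotent since each $f_i$ is, and $\ker f=\ker f_1\times\ker f_2=\Psi_1\times\Psi_2$. Hence $\Psi_1\times\Psi_2\in\RCon{A_1\times A_2}$. This direction does not even need the singleton-subalgebra hypothesis.

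\medskip
\textbf{The ``$\subseteq$'' inclusion.} Let $\Theta\in\RCon{A_1\times A_2}$, witnessed by a retraction $f\colon A_1\times A_2\to A_1\times A_2$ with $\ker f=\Theta$. By the Fraser--Horn property (Lemma~\ref{lemma:FH}(B)) we may write $\Theta=\Theta_1\times\Theta_2$ with $\Theta_i\in\Con{A_i}$; the task is to upgrade each $\Theta_i$ to a \emph{retraction} congruence of $A_i$. Fix a singleton subalgebra $\set{c_2}$ of $A_2$ (this is the point where the hypothesis enters). Consider the composite
\[
A_1\xrightarrow{\ \iota_1\ }A_1\times A_2\xrightarrow{\ f\ }A_1\times A_2\xrightarrow{\ \pi_1\ }A_1,
\]
where $\iota_1(x)=(x,c_2)$ and $\pi_1$ is the first projection; call this composite $g_1$. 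Since $\set{c_2}$ is a subalgebra, $\iota_1$ is a homomorphism, so $g_1$ is an endomorphism of $A_1$. The main thing to check is that $g_1$ is idempotent and that $\ker g_1=\Theta_1$. I expect this to follow from the product structure of $\Theta$: because $\ker f=\Theta_1\times\Theta_2$ and the second coordinate of $\iota_1(x)$ is constantly $c_2$, one gets $f(x,c_2)=(g_1(x),h(x))$ where $h(x)$ depends only on the $\Theta_2$-class of $c_2$, hence is constant (say $h(x)=d_2$ for all $x$); idempotence of $f$ then forces $f(g_1(x),d_2)=(g_1(x),d_2)$, and comparing first coordinates while using $(g_1(x),d_2)\mathrel{\ker f}(x,c_2)$ (which holds as soon as $d_2\mathrel{\Theta_2}c_2$, to be verified) gives $g_1(g_1(x))=g_1(x)$. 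For the kernel: $g_1(x)=g_1(y)$ together with the constancy of $h$ gives $f(x,c_2)=f(y,c_2)$, i.e.\ $(x,c_2)\mathrel\Theta(y,c_2)$, i.e.\ $(x,y)\in\Theta_1$; conversely $(x,y)\in\Theta_1$ gives $(x,c_2)\mathrel\Theta(y,c_2)$ hence $g_1(x)=g_1(y)$. Thus $\Theta_1=\ker g_1\in\RCon{A_1}$, and symmetrically $\Theta_2\in\RCon{A_2}$, so $\Theta=\Theta_1\times\Theta_2$ lies in the right-hand set.

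\medskip
\textbf{Anticipated obstacle.} The delicate point is pinning down the ``constant'' $d_2:=\pi_2 f(x,c_2)$ and confirming $d_2\mathrel{\Theta_2}c_2$; one must argue that $\pi_2\circ f\circ\iota_1$ is genuinely constant rather than merely $\Theta_2$-valued, and that the value it takes is $\Theta_2$-related to $c_2$. For this I would exploit that $f$ is idempotent: applying $f$ to $(g_1(x),d_2)$ and reading off both coordinates, together with $f(x,c_2)\mathrel{\ker f}(x,c_2)$ trivially, shows $(g_1(x),d_2)\mathrel\Theta(g_1(x),d_2)$ automatically, so the real leverage is that for fixed $x$, $f(f(x,c_2))=f(x,c_2)$ gives $f(g_1(x),d_2)=(g_1(x),d_2)$, and then feeding $(x',c_2)$ for a second element and using that $\Theta$ is a product relation disentangles the coordinates. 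If a cleaner route is wanted, one can instead simply set $g_1:=\pi_1\circ f\circ\iota_1$ and verify directly from $\ker f=\Theta_1\times\Theta_2$ that $\ker g_1=\Theta_1$ and that $g_1$ is idempotent, bypassing any explicit identification of $d_2$; this is essentially the same computation organized to avoid naming the auxiliary constant. Everything else is routine.
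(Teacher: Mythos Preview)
Your overall architecture matches the paper's: define $g_i:=\pi_i\circ f\circ\iota_i$ using the singleton subalgebras, show each $g_i$ is a retraction with kernel $\Theta_i$, and handle the easy inclusion with $f_1\times f_2$. The problem is your central auxiliary claim.

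The assertion that $h(x):=\pi_2 f(x,c_2)$ is \emph{constant} is false. Take $A_1=A_2=\chain 2$ and the retraction $f(x,y)=(x,x)$ (so $\ker f=\Delta\times\nabla$). With $c_2=0$ you get $f(x,0)=(x,x)$, hence $h(0)=0$ and $h(1)=1$. So your kernel argument (``$g_1(x)=g_1(y)$ together with constancy of $h$ gives $f(x,c_2)=f(y,c_2)$'') breaks, and your idempotence argument, which also leans on a single $d_2$, does not go through as written. You correctly flag this as the delicate point, but the repair you anticipate (``genuinely constant'') is not available.

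What \emph{is} true, and what the paper uses, is that $h(x)$ always lies in the $\Theta_2$-class of $c_2$: from $f(x,c_2)=(u,v)$ and idempotence one gets $((x,c_2),(u,v))\in\Theta=\Theta_1\times\Theta_2$, hence $(c_2,v)\in\Theta_2$. This weaker fact is enough. For idempotence: $(u,c_2)\mathrel\Theta(u,v)$ gives $f(u,c_2)=f(u,v)=(u,v)$, so $g_1(g_1(x))=\pi_1 f(u,c_2)=u=g_1(x)$. For the kernel inclusion $\ker g_1\subseteq\Theta_1$: from $f(x,c_2)=(u,v)$ one reads off $(x,u)\in\Theta_1$; if also $f(x',c_2)=(u,v')$ then $(x',u)\in\Theta_1$, and transitivity gives $(x,x')\in\Theta_1$. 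The reverse inclusion is exactly as you wrote. Your ``cleaner route'' paragraph is pointing at this computation but does not carry it out; once you replace the constancy claim by ``$h(x)\mathrel{\Theta_2}c_2$'' and argue as above, your proof coincides with the paper's.
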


\begin{proof}[Proof of Proposition~\ref{prop:wFwzrvTzn}] Let $m$ be a majority term in $\mathcal V$, and denote $A_1\times A_2$ by $A$. For $i\in\set{1,2}$, let $\set{c_i}$ be a one-element subalgebra of $A_i$. We need the following maps
\begin{align*}
\pi_i&\colon A\to A_i \text{ defined by } (x_1,x_2)\mapsto x_i\text{ for }i\in\set{1,2},\cr
\iota_1&\colon A_1\to A\text{ defined by } x_1 \mapsto (x_1,c_2)
\text{ and } \cr
\iota_2&\colon A_2\to A\text{ defined by }  x_2 \mapsto (c_1,x_2).
\end{align*}
We claim that
\begin{equation}
\parbox{10cm}{if $f\colon A\to A$ is a retraction, then so are
$f_1:=\pi_1\circ f\circ \iota_1\colon A_1\to A_1$ and 
$f_2:=\pi_2\circ f\circ \iota_2\colon A_2\to A_2$, and $\ker f=\ker{f_1}\times\ker{f_2}$.}
\label{pbx:shWrj}
\end{equation}
As a composite of homomorphisms, $f_1$ is a homomorphism, in fact, an endomorphism of $A_1$. For $x\in A_1$, let $(u,v):=f(x,c_2)$ and $(u',v'):=f(u,c_2)$.
Then $u:=f_1(x)$ and $u'=f_1(u)$. 
Let $\Theta=\ker f\in\Con A$.
Since $A$ has the Fraser--Horn property by Lemma~\ref{lemma:FH}, $\Theta=\Theta_1\times \Theta_2$ with $\Theta_1\in\Con{A_1}$ and $\Theta_2\in\Con{A_2}$. 
Using that $f$ is idempotent, we have that 
$f(x,c_2)=(u,v)=f(u,v)$. This means that $((x,c_2),(u,v))\in\Theta$, whereby $(c_2,v)\in\Theta_2$. Since $(u,u)\in \Theta_1$, we have that $((u,c_2), (u,v))\in\Theta_1\times \Theta_2=\Theta$. Thus, 
$(u',v')= f(u,c_2)=f(u,v)=(u,v)$, whence $u'=u$. Hence, $f_1(f_1(x))=u'=u=f_1(x)$, whereby $f_1$ is a retraction of $A_1$. By symmetry, $f_2$ is a retraction of $A_2$. 

To complete the argument for \eqref{pbx:shWrj}, we need to show that
\begin{equation}
\text{for } i\in\set{1,2},\quad \ker {f_i}=\Theta_i.
\label{eq:wWrskFj}
\end{equation}
It suffices to deal with $i=1$.  
Assume that $(x,x')\in \ker{f_1}$. Then $f_1(x)=:u=f_1(x')$,   $f(x,c_2)=(u,v)$ and $f(x',c_2)=(u,v')$ for some $v,v'\in A_2$. 
Since $f$ is idempotent, $f(u,v)=(u,v)$. This  equality and $f(x,c_2)=(u,v)$ give that $( (x,c_2),(u,v))\in\Theta$, whereby $(x,u)\in\Theta_1$. Similarly, $(x',u)\in\Theta_1$. By transitivity and symmetry, we obtain that $(x,x')\in\Theta_1$. Thus, $\ker{f_1}\subseteq \Theta_1$.

Conversely, assume that $(x,x')\in\Theta_1$. Denote $f(x,c_2)$ and $f(x',c_2)$ by $(u,v)$ and $(u',v')$, respectively. Since 
$((x,c_2),(x',c_2))\in\Theta_1\times\Theta_2=\Theta$, we have that 
$(u,v)=(u',v')$. Hence, $f_1(x)=u=u'=f_1(x')$, whence $(x,x')\in\ker{f_1}$. Therefore, $\Theta_1\subseteq \ker{f_1}$, and we have obtained the validity of \eqref{eq:wWrskFj} and that of  \eqref{pbx:shWrj}.

Next, armed with   \eqref{pbx:shWrj},
denote  $\set{\Psi_1\times \Psi_2: \Psi_1\in \RCon {A_1}\text{ and }\Psi_2\in\RCon{A_2}}$ by $H$. If  $\Psi\in \RCon A$, 
then we can pick  a retraction $f\colon A\to A$ with $\ker f=\Psi$, and it follows from  \eqref{pbx:shWrj}
that $\Psi\in H$. Therefore, $\RCon A\subseteq H$.

Conversely, assume that $\Psi=\Psi_1\times \Psi_2\in H$. For $i\in\set{1,2}$, $\Psi_i\in\RCon{A_i}$ allows us to pick a retraction $g_i\colon A_i\to A_i$ with $\ker{g_i}=\Psi_i$. It is obvious that $g_1\times g_2\colon A\to A$, defined by $(x_1,x_2)\mapsto(g_1(x_1),g_2(x_2))$ is a retraction of $A$.  Since $((x_1,x_2),(y_1,y_2))\in \ker {(g_1\times g_2)}
\iff \bigl((x_1,y_1)\in\ker{g_1} \text{ and }(x_2,y_2)\in\ker{g_2}\bigr)
\iff  \bigl((x_1,y_1)\in\Psi_1 \text{ and }(x_2,y_2)\in\Psi_2\bigr)\iff
((x_1,x_2),(y_1,y_2))\in \Psi_1\times \Psi_2=\Psi
$, we have that $\Psi=\ker {(g_1\times g_2)}\in \RCon A$.
Thus, $H\subseteq \RCon A$. Consequently,  $\RCon A =  H$, and the proof of Proposition~\ref{prop:wFwzrvTzn} is complete.
\end{proof}

\begin{example}\label{ex:whWrh} As opposed to retraction congruences, retracts and retractions of direct products of two lattices are not factorizable in general. This is exemplified by the direct square $L$ of the two-element chain $\chain 2$,  its retraction map $f\colon L\to L$ defined by $(x,y)\mapsto (x,x)$, and
the retract $f(L)=\set{(0,0),(1,1)}$.
\end{example}

This example explains that the following observation is only a week statement.

\begin{observation}\label{obs:swggKmz} Let $A_1$ and $A_2$ be algebras. For $i\in\set{1,2}$, let $S_i$ be a retract of $A_i$ and let $f_i\colon A_i\to A_i$ be a retraction. Then  $S_1\times S_2$ is a retract of $A:=A_1\times A_2$, and 
$f_1\times f_2\colon A\to A$ defined by $(x_1,x_2)\mapsto (f_1(x_1), f_2(x_2))$ is a retraction.
\end{observation}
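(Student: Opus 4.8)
The plan is to verify the two claimed facts directly from the definitions, since Observation~\ref{obs:swggKmz} is an elementary statement with no hidden subtlety (indeed Example~\ref{ex:whWrh} warns us that this is as much as can be expected in general). First I would check that $g:=f_1\times f_2\colon A\to A$ is a homomorphism: a product of algebra homomorphisms is a homomorphism, because each basic operation of $A=A_1\times A_2$ acts coordinatewise and each $f_i$ respects the operations of $A_i$. Then I would check idempotency: for $(x_1,x_2)\in A$,
\[
g\bigl(g(x_1,x_2)\bigr)=g\bigl(f_1(x_1),f_2(x_2)\bigr)=\bigl(f_1(f_1(x_1)),f_2(f_2(x_2))\bigr)=\bigl(f_1(x_1),f_2(x_2)\bigr)=g(x_1,x_2),
\]
using that each $f_i$ is a retraction, i.e.\ $f_i\circ f_i=f_i$. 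Hence $g$ is an idempotent endomorphism of $A$, that is, a retraction of $A$.

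Next I would turn to the statement about $S_1\times S_2$. Since $S_i$ is a retract of $A_i$, by definition there is a retraction $f_i'\colon A_i\to A_i$ with $f_i'(A_i)=S_i$; relabelling, we may as well assume the given $f_i$ already satisfies $f_i(A_i)=S_i$ (the $f_i$ in the hypothesis and the one witnessing that $S_i$ is a retract need not coincide, but we are free to use the latter). Now apply the first part: $g=f_1\times f_2$ is a retraction of $A$, and its image is
\[
g(A)=\set{(f_1(x_1),f_2(x_2)):(x_1,x_2)\in A_1\times A_2}=f_1(A_1)\times f_2(A_2)=S_1\times S_2,
\]
where the middle equality is the routine fact that the image of a product of maps is the product of the images. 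Therefore $S_1\times S_2=g(A)$ is a retract of $A$.

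There is essentially no main obstacle here; the only point worth a word of care is the bookkeeping in the previous paragraph, namely that the retraction $f_i$ appearing in the hypothesis is merely an auxiliary object for the ``$f_1\times f_2$ is a retraction'' assertion, whereas the witness for ``$S_1\times S_2$ is a retract'' should be built from a retraction of $A_i$ onto $S_i$. Once that is made explicit, both claims reduce to the two displayed computations above together with the observation that homomorphisms, idempotency, and images all behave coordinatewise in a direct product.
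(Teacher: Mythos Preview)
Your proposal is correct and follows exactly the same approach as the paper: assume without loss of generality that $S_i=f_i(A_i)$, then observe that $f_1\times f_2$ is a retraction of $A$ with image $S_1\times S_2$. The paper's proof is a one-liner that leaves the coordinatewise verifications implicit, whereas you spell them out, but the argument is identical.
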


\begin{proof} We can assume that $S_i=f_i(A_i)$. Denote $f_1\times f_2$ by $f$; it is clearly a retraction and $f(A)=S_1\times S_2$.
\end{proof}

The following remark is trivial and does not assume the existence of a majority term, but it will be useful later.

\begin{remark}\label{rem:FXjPT}
If $f\colon A\to A$ is a retraction of an algebra $A$, then 
$f(A)=\set{x\in A: f(x)=x}$.
\end{remark}

\begin{proof}
If $f(x)=x$, then $x=f(x)\in f(A)$ is clear. Conversely, if $x\in f(A)$, then $x$ is of the form $x=f(y)$, whereby $f(x)=f(f(y))=(f\circ f)(y)=f(y)=x$. 
\end{proof}

\section{The main result}
In order to formulate the main result of the paper, some definitions and notations are necessary. Remember that for a lattice $L$,  $\Ret L=(\Ret L,\subseteq)$ stands for the poset (partially ordered set) consisting of the retracts of $L$ and the empty set. It is a bounded poset.
For $n\in\Nplu:=\set{1,2,3,\dots}$, the $n$-element chain is denoted by $\chain n$. Let 
\[\Nfrom 2:=\Nplu\setminus\set 1=\set{2,3,4,\dots}.
\]
By a \emph{grid} we mean the direct product of $G=G_{m,n}:=\chain m\times \chain n$, where $m,n\in\Nfrom 2$. If we want to express its parameters, then we speak of the $m\times n$ grid; it is a distributive lattice with $mn$ elements. According to Gr\"atzer and Knapp~\cite{gratzerknapp1}  and   \cite{gratzerknapp3}, grids are the same as distributive rectangular lattices. 
Although ``grid'' would have been much shorter and quite visual, we use ``distributive rectangular lattice" in the title to make 
it more informative. We may use the notation  $G_{m,n}:=\chain m\times \chain n$ even if $m$ or $n$ is 1, but then $G_{m,n}$ is a chain, not a grid. 
The goal of this section is to prove the following theorem.

\begin{theorem}\label{thmmain} If $m,n\in\Nplu$ and $G=\chain m\times \chain n$, then
$\Ret{G}=\bigl(\Ret{G},\subseteq\bigr)$ is a lattice 
in which the meet operation is the same as forming intersection.
\end{theorem}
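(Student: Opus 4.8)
The plan is to reduce everything to the case of a genuine grid and then exploit the factorization of retraction congruences from Proposition~\ref{prop:wFwzrvTzn}. First I would dispose of the degenerate cases: if $m=1$ or $n=1$, then $G$ is a chain, every sublattice is a retract (project onto the nearest element of a fixed convex sublattice, or note that in a chain the retracts are exactly the nonempty convex sublattices), and the retracts together with $\emptyset$ form a chain, hence a lattice in which meet is intersection. So assume $m,n\in\Nfrom 2$, i.e.\ $G$ is an $m\times n$ grid. Since $\chain m$ and $\chain n$ each have a singleton subalgebra, Proposition~\ref{prop:wFwzrvTzn} applies: every retraction congruence of $G$ is of the form $\Psi_1\times\Psi_2$ with $\Psi_i\in\RCon{\chain i}$.

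Next I would establish that a retract is determined (up to the obvious data) by the pair $(\Psi_1,\Psi_2)$ together with, on each factor, a transversal that is a retract of that factor; more precisely, for a chain $\chain k$ the retracts are the nonempty convex sublattices, and a retraction of $\chain k$ onto such a sublattice $[a,b]$ is forced on each block of its kernel to land in $[a,b]$ monotonically, so the retraction is essentially unique given its image. Using Observation~\ref{obs:swggKmz}, any product $S_1\times S_2$ of a retract of $\chain m$ and a retract of $\chain n$ is a retract of $G$; the content is the converse-type statement, that every retract $S$ of $G$, while not necessarily such a product (Example~\ref{ex:whWrh}), still has its closure properties controlled by the two coordinate projections. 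Concretely I would show: if $f$ is a retraction of $G$ with $\ker f=\Psi_1\times\Psi_2$, then the projections $\pi_i(S)$ of $S=f(G)$ are retracts of $\chain i$, and $S$ is a sublattice of $\pi_1(S)\times\pi_2(S)$ that meets every $(\Psi_1\times\Psi_2)$-block in exactly one point.

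The crucial step — and the one I expect to be the main obstacle — is to prove that the intersection of two retracts of $G$ is again a retract of $G$, and that it is the meet in $(\Ret G,\subseteq)$. That intersection is the infimum in the inclusion order is automatic once we know the intersection is a retract; and once meets exist in a bounded poset with a largest element, joins exist too (define $S_1\vee S_2$ as the intersection of all retracts containing $S_1\cup S_2$, noting $G$ itself is one such), so the lattice property follows. Thus the heart of the matter is closure of retracts under intersection. Given retractions $f,g$ of $G$ with images $S=f(G)$, $T=g(G)$, I would try to build a retraction onto $S\cap T$ by iterating: since $G$ is finite, some power of $f\circ g$ (equivalently, a suitable alternating composite, or $(f\circ g)^{N!}$) is idempotent, and I would argue that its image is exactly $S\cap T$ — the inclusion ``$\subseteq$'' is clear because the image lies in both $f(G)$ and $g(G)$ (using Remark~\ref{rem:FXjPT} and that $f,g$ fix $S,T$ pointwise), while ``$\supseteq$'' requires showing every point fixed by both $f$ and $g$ is fixed by the composite and, conversely, that no points of $S\cap T$ are lost. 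For the latter one uses that on the grid the coordinatewise structure forces $f$ and $g$ to commute suitably on $S\cap T$, or one falls back on the congruence picture: $\ker f\vee\ker g$ (join in $\Con G$) is again a product $\Psi_1'\times\Psi_2'$ of chain congruences, each of which is a retraction congruence of the corresponding chain, so by Proposition~\ref{prop:wFwzrvTzn} it is a retraction congruence of $G$, and one checks the associated retract can be taken to be $S\cap T$. Making this last identification precise — matching the transversal of $\ker f\vee\ker g$ with $S\cap T$ — is where the grid-specific combinatorics (monotonicity of the coordinate retractions, convexity of chain retracts) really enters, and I expect the bookkeeping there to be the technical core of the proof.
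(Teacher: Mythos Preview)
Your proposal has two genuine gaps, and both stem from missing the paper's key structural lemma.

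First, a factual error that recurs: you assert twice that the retracts of a finite chain are exactly the nonempty convex sublattices. This is false. \emph{Every} nonempty subset of a finite chain is a retract (Observation~\ref{obs:chain}): given $\emptyset\neq S\subseteq\chain k$, send each $x\in\chain k$ to the largest element of $S$ below $x$ (or the least element of $S$ if none exists). Your degenerate-case argument therefore proves the wrong statement, and the later paragraph about ``convexity of chain retracts'' controlling the combinatorics rests on a false premise.

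Second, and more seriously, neither of your two proposed mechanisms for producing a retraction onto $S\cap T$ works. Take $G=\chain 3\times\chain 2$ with $\chain3=\{0,1,2\}$ and $\chain2=\{0,1\}$. Let
\[
S=\{(0,0),(1,1),(2,1)\},\qquad T=\{(0,0),(1,0),(2,1)\}.
\]
Both are retracts (left-injective skew chains), with retractions $f(i,j)$ equal to the unique element of $S$ with first coordinate $i$, and $g(i,j)$ the unique element of $T$ with first coordinate $i$; both have kernel $\Delta_{\chain3}\times\nabla_{\chain2}$. Then $f\circ g=f$ and $g\circ f=g$, so every iterate $(f\circ g)^N$ has image $S$, not $S\cap T=\{(0,0),(2,1)\}$; the point $(1,1)$ is fixed by $f\circ g$ but lies outside $T$. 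Your alternating-composite idea therefore fails. The congruence-join idea fails for the same example: $\ker f\vee\ker g=\ker f=\ker g$ has three blocks, while $|S\cap T|=2$, so no transversal of $\ker f\vee\ker g$ can equal $S\cap T$.

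The paper does something quite different: it first proves the explicit description
\[
\Ret G=\sts G\cup\isc G
\]
(straight subsets together with injective skew chains), and only then turns to intersections. Once this description is in hand, closure under intersection is almost trivial: if $X,Y\in\sts G$ then $X\cap Y\in\sts G$ since $(A_1\times A_2)\cap(B_1\times B_2)=(A_1\cap B_1)\times(A_2\cap B_2)$; and if, say, $X$ is a left-injective skew chain, then $X\cap Y$ is a subset of $X$, hence again a left-injective chain, hence either straight or in $\isc G$. The real work is the classification, not a direct construction of a retraction onto $S\cap T$; the congruence factorization from Proposition~\ref{prop:wFwzrvTzn} is used there to analyze a single retract, not to combine two of them.
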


Before proving the theorem and formulate a related statement, some preparations are necessary.
Let $G=\chain m\times \chain n$ be a grid. The empty set $\emptyset$ and the sets $A_1\times A_2$ with $\emptyset \neq A_1\subseteq \chain m$ and  $\emptyset \neq A_2\subseteq \chain n$ are called the  \emph{straight subsets} of $G$, while the rest of the subsets are \emph{skew}.  The restriction of a map (= function) $g$ to a set $Y$ is denoted by $\restrict g Y$. 
The \emph{first projection} $G\to\chain m$ is denoted by $\pi_1$ while $\pi_2\colon G\to \chain n$, defined by $(x_1,x_2)\mapsto x_2$ is the \emph{second projection}. A subset $X$ of $G$ is \emph{left injective} if $\restrict {\pi_1}X$ is injective, that is, if $(x_1,x_2)\in X$, $(y_1,y_2)\in X$, and $(x_1,x_2)\neq (y_1,y_2)$ imply that $x_1\neq x_2$. Right injective subsets are analogously defined with the help of $\pi_2$. 
The subset $X\subseteq G$ is called an \emph{injective subset} if it is left injective or right injective. Subsets that are both left and right injective are \emph{doubly injective}. We let
\begin{align*}
\sts G&:=\set{X: X\text{ is a straight subset of }G}\cr
\isc G&:=\set{X: X\text{ is an injective skew chain in }G}.
\end{align*}
Now, as an appendix to Theorem~\ref{thmmain}, we formulate the following statement.

\begin{proposition}\label{propmain}
For $k\in\Nplu$ and the $k$-element finite chain $\chain k$,  $\Ret{\chain k}$ is the $2^k$-element (boolean) powerset lattice consisting of all subsets of $\chain k$. 
For integers $m,n\in \Nfrom 2$ and $G:=G_{m,n}=\chain m\times \chain n$,
the lattice  
$\Ret G$ is the disjoint union of its subsets $\sts{G}$ and $\isc{G}$.
The number of elements of $\sts{G}$ is 
\begin{equation}
|\sts{G}|=1+(2^m-1)(2^n-1)
\label{eq:shRw}
\end{equation}
while that of $\isc{G}$ is
\begin{align}
\phantom{} &\phantom{mmmmmi} |\isc{G}|=\cr
&=
\sum_{s=2}^{\max\set{m,n}}\Biggl(
{m\choose s}\cdot{n+s-1\choose s} +{n\choose s}\cdot{m+s-1\choose s}\cr
&\phantom{=\sum_{s=2}^{\min\set{m,n}}\Biggl(}
- {m\choose s}\cdot{n\choose s}
-n\cdot{m\choose s} - m\cdot{n\choose s} 
\Biggr). \label{eq:wksvmcsmw}
\end{align}
Of course $|\Ret G|= |\sts{G}|+|\isc G|$.
\end{proposition}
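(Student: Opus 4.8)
The plan is to split the work into two largely independent parts: first, an exact structural description of which subsets of $G$ are retracts, and second, a bookkeeping computation that counts each family. For the structural part, I would proceed as follows. Since $\chain k$ is a finite chain, every subset $X$ of $\chain k$ is a sublattice, and the monotone surjection $f\colon \chain k\to X$ that sends each element to the largest element of $X$ below it (and the elements below $\min X$ to $\min X$) is an idempotent endomorphism; this gives $\Ret{\chain k}=2^{\chain k}$. For a grid $G=\chain m\times\chain n$ I would first observe that a retract $S=f(G)$ is a sublattice, and by Remark~\ref{rem:FXjPT} it equals $\set{x\in G: f(x)=x}$; being a connected-looking sublattice of a grid, $S$ is either "straight" (a box $A_1\times A_2$, which by Proposition~\ref{prop:wFwzrvTzn} and $\Ret{\chain m}=2^{\chain m}$ is exactly the image of a product retraction $f_1\times f_2$) or it is "skew." For the skew case, the key claim to establish is: a skew sublattice $S$ of $G$ is a retract if and only if $S\in\isc G$, i.e.\ $S$ is a chain that is left- or right-injective. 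One direction (skew retracts are injective skew chains) would come from analyzing the possible images: if $S$ is a retract containing two incomparable elements $a,b$ with $a\wedge b<a,b<a\vee b$, then $\set{a\wedge b,a,b,a\vee b}$ and hence a genuine $2\times 2$ square sits inside $S$, and I would argue such an $S$ is forced to be straight; and if $S$ has two distinct points with the same first coordinate and two with the same second coordinate, one can again produce a square, contradicting skewness. The converse (every injective skew chain is a retract) requires exhibiting a retraction: given an injective skew chain $S$, say left-injective, I would build $f\colon G\to G$ by an explicit nearest-point-below rule along the chain $S$, checking it is order preserving, meet- and join-preserving (this is where left-injectivity and the chain property are used), idempotent, and has image $S$.

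For the disjointness statement, $\sts G\cap\isc G=\emptyset$ is immediate from the definitions: a straight set is $A_1\times A_2$ with both factors nonempty, so it is a chain only if one factor is a singleton — but then it is still "straight" by definition and in particular not skew — whereas members of $\isc G$ are skew by definition. So $\Ret G=\sts G\sqcup\isc G$ follows once the structural characterization above is in hand.

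For the enumeration of $\sts G$, I would note a straight subset is either $\emptyset$ or $A_1\times A_2$ with $\emptyset\neq A_1\subseteq\chain m$ and $\emptyset\neq A_2\subseteq\chain n$, and such a box determines the pair $(A_1,A_2)$ uniquely; this gives $|\sts G|=1+(2^m-1)(2^n-1)$, which is \eqref{eq:shRw}. The computation of $|\isc G|$ is the combinatorial heart. I would stratify the injective skew chains by their size $s=|S|$, noting $s\geq 2$ (a one-element set is straight) and, since a left-injective subset has at most $m$ elements and a right-injective one at most $n$, that $s\leq\max\set{m,n}$. Writing $\isc G$ as the union of the left-injective skew chains and the right-injective skew chains, I would count left-injective chains of size $s$: choosing the $s$ distinct first coordinates gives $\binom ms$ ways, and then the second coordinates must form a weakly monotone sequence of length $s$ from $\chain n$ (because $S$ is a chain and the first coordinates are already strictly monotone after sorting), which is $\binom{n+s-1}s$ by stars and bars — giving $\binom ms\binom{n+s-1}s$; symmetrically $\binom ns\binom{m+s-1}s$ right-injective ones. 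Then I must subtract overcounting. The overlap of the two families is the doubly injective skew chains of size $s$: $\binom ms$ choices of first coordinates and $\binom ns$ choices of second coordinates, but now the second coordinates must be the strictly monotone arrangement matching the sorted first coordinates, so $\binom ms\binom ns$ of them, of which I must then also remove the straight ones. A size-$s$ doubly injective chain is straight exactly when it is a single row or column intersected with ... actually a straight $A_1\times A_2$ that is a chain has $|A_1|=1$ or $|A_2|=1$; among the "abstract" count $\binom ms\binom ns$ the straight ones are those horizontal/vertical chains, contributing $n\binom ms$ (choose a row, choose $s$ columns) plus $m\binom ns$, and these must be subtracted because such sets were already tallied in $\sts G$. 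Collecting $\binom ms\binom{n+s-1}s+\binom ns\binom{m+s-1}s-\binom ms\binom ns-n\binom ms-m\binom ns$ and summing over $s$ from $2$ to $\max\set{m,n}$ yields \eqref{eq:wksvmcsmw}; finally $|\Ret G|=|\sts G|+|\isc G|$ by disjointness.

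The main obstacle I anticipate is the structural dichotomy — proving that a skew retract must be an injective skew chain, and conversely constructing an explicit retraction onto an arbitrary injective skew chain. The "square forces straightness" argument needs care about exactly which sublattices of a grid can be retracts (one must rule out, e.g., an L-shaped or staircase sublattice that is not a chain), and the explicit retraction in the converse must be shown to preserve both $\vee$ and $\wedge$, which is the place where the chain hypothesis and one-sided injectivity are genuinely used; a naive "round down coordinatewise" map need not land in $S$, so the retraction has to be defined relative to the chain order on $S$. The counting, by contrast, I expect to be routine modulo the inclusion--exclusion bookkeeping described above.
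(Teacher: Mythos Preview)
Your overall plan and the counting are basically right (the formula you collect is correct), but the structural dichotomy is handled quite differently in the paper, and your sketch of that part has a real gap. The paper does \emph{not} argue ``a square in $S$ forces $S$ to be straight'' directly from the sublattice structure. Instead, it picks a retraction congruence $\Theta$ witnessing the retract $S$, invokes Fraser--Horn (Lemma~\ref{lemma:FH}(B)) to write $\Theta=\Theta_1\times\Theta_2$, and splits into two cases. If neither $\Theta_i$ equals $\nabla$, a short computation with the unique representatives $w_{i,j}\in S\cap(U_i\times V_j)$ shows $S=X\times Y$ is straight; if, say, $\Theta_2=\nabla$, then the $\Theta$-blocks are ``vertical strips'' and the one-element-per-block condition immediately gives left injectivity, while a brief argument with $f$ excludes incomparable pairs. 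For the converse direction, the paper again avoids building an explicit nearest-point map: given a left-injective skew chain $S$, it takes any $\Theta_1\in\Con{\chain m}$ whose blocks each meet $\pi_1(S)$ once, sets $\Theta=\Theta_1\times\nabla_{\chain n}$, and applies Observation~\ref{obs:drCbngR}. This congruence route is short and mechanical; your proposed direct arguments (``square forces straightness'' and an explicit retraction onto $S$) may be completable, but you have not supplied them, and the first one in particular is not obvious without essentially reconstructing the congruence decomposition.

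One small correction in your bookkeeping: horizontal and vertical $s$-element chains (with $s\ge 2$) are \emph{not} doubly injective, so they do not sit inside the $\binom ms\binom ns$ term. The terms $n\binom ms$ and $m\binom ns$ are the \emph{straight} left-injective and \emph{straight} right-injective chains, respectively; they are subtracted from $\binom ms\binom{n+s-1}s$ and $\binom ns\binom{m+s-1}s$ (to pass from ``injective chain'' to ``injective skew chain''), while $\binom ms\binom ns$ is subtracted once as the inclusion--exclusion overlap of left- and right-injective chains. The paper notes in passing that for $s\ge 2$ the three excluded families (doubly injective, left-injective straight, right-injective straight) are pairwise disjoint, which is why simple subtraction works. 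Your final summand matches, but the attribution of the three negative terms in your explanation is off.
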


\begin{corollary}\label{corolmain} For integers $m,n\geq 2$, the lattice $\Ret {G_{m,n}}$ has a maximal chain consisting of $\max\set{m,n}+2$ elements and a maximal chain consisting of $m+n$ elements. 
\end{corollary}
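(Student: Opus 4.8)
The goal is to exhibit two maximal chains in $\Ret{G_{m,n}}$ of the stated lengths. My plan is to build both chains explicitly from the description of $\Ret G$ provided by Proposition~\ref{propmain}, namely that $\Ret G = \sts G \sqcup \isc G$, and then verify maximality using that the meet in $\Ret G$ is intersection.

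\textbf{The chain of length $m+n$.} This one should live entirely inside $\sts G$. Write $\chain m=\set{0<1<\dots<m-1}$ on the first coordinate and $\chain n=\set{0<1<\dots<n-1}$ on the second. Start from the singleton $\set{(0,0)}=\set 0\times\set 0$ at the bottom and from $\chain m\times\chain n=G$ at the top; in between, climb by enlarging one coordinate-factor at a time from an initial segment to the next initial segment, e.g. $\set 0\times\set 0 \subsetneq \set{0,1}\times\set 0\subsetneq\dots\subsetneq\chain m\times\set 0\subsetneq\chain m\times\set{0,1}\subsetneq\dots\subsetneq\chain m\times\chain n$. Each member is a straight subset (a product of nonempty subsets of the two chains), hence by Proposition~\ref{propmain} it is a retract, i.e. an element of $\Ret G$. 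The chain has $1+(m-1)+(n-1)=m+n-1$ elements counted this way — I should recount: going from $\set 0\times\set 0$ up to $\chain m\times\set 0$ takes $m$ sets, then continuing up to $\chain m\times\chain n$ adds $n-1$ more, giving $m+n-1$; to reach $m+n$ I simply prepend the empty set $\emptyset\in\Ret G$ at the bottom, which is legitimate since $\emptyset$ is the least element of the bounded poset $\Ret G$. For maximality I must argue nothing fits strictly between consecutive members. Between $\emptyset$ and a singleton there is nothing. Between $A_1\times A_2$ and $A_1'\times A_2$ where $A_1'=A_1\cup\set{j}$ covers $A_1$ in the powerset of $\chain m$: any retract $R$ with $A_1\times A_2\subsetneq R\subsetneq A_1'\times A_2$ is itself a subset of the straight set $A_1'\times A_2$, hence has both projections contained in $A_1',A_2$; using doubly-injectivity considerations or directly the classification one checks such an $R$ must again be straight and then forced to equal one of the endpoints — this is the step I expect to require the most care, and I would lean on the explicit list in Proposition~\ref{propmain} and on the fact that a retract has a least and a greatest element (being a sublattice that is a retract of a bounded lattice, it contains $f(0)$ and $f(1)$).

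\textbf{The chain of length $\max\set{m,n}+2$.} Assume WLOG $m\le n$, so $\max\set{m,n}=n$. Here I want a maximal chain that runs up through $\isc G$. Take a maximal injective skew chain $C\in\isc G$: concretely, a ``diagonal'' such as $C=\set{(0,0),(1,1),\dots,(m-1,m-1),(m-1,m),\dots,(m-1,n-1)}$, which is a chain in $G$, is left injective? — no, $(m-1,\cdot)$ repeats; instead take the genuinely doubly-injective maximal skew chain $\set{(0,0),(1,1),\dots,(m-1,m-1)}$ padded along the second coordinate only where it keeps injectivity, or more simply just a maximal chain of the poset $G$ that is skew and injective. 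The point is that $C$ has $n$ elements (a maximal chain in $\chain m\times\chain n$ has $m+n-1$ elements, but a maximal \emph{injective} skew chain is shorter; I would pin down its exact length $n$ from the combinatorics behind \eqref{eq:wksvmcsmw}, e.g. the longest right-injective chain uses each second coordinate at most once so has $\le n$ elements, and $n$ is attainable). Then below $C$ in $\Ret G$ sit its retract-sub-chains, and since every nonempty subchain of a chain is again a chain and, when skew and injective, again a retract, a maximal chain from $\emptyset$ up to $C$ inside $\Ret G$ has the form $\emptyset\subsetneq\set{\text{pt}}\subsetneq\dots\subsetneq C$ — but this only has $|C|+1$ elements, not $|C|+2$. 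To get the extra step I note $C$ is not maximal in $\Ret G$: it is covered by a straight subset (its ``straight closure'' $\pi_1(C)\times\pi_2(C)$, or the whole grid $G$), so I can append one straight set on top, yielding a chain $\emptyset\subsetneq\{(0,0)\}\subsetneq\dots\subsetneq C\subsetneq (\text{a straight set})$; counting gives $1+|C|+1=n+2=\max\set{m,n}+2$, provided I choose the sub-chain of $C$ and the top straight set so that each covering is tight and $|C|=n$ comes out exactly. The main obstacle, again, is verifying that all the covering relations along this chain are genuine covers in $\Ret G$ — that nothing (straight or skew) squeezes in between — and for the bottom and middle portion this reduces to: within a chain, consecutive singletons-then-doubletons etc. leave no room, while at the top the step from $C$ to the chosen straight set must be checked against the classification, using once more that any retract sitting between them has a least and greatest element and lies inside a small straight set.

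So the overall strategy is: (1) write down the two explicit chains, one inside $\sts G$ of size $m+n$ and one passing through a maximal injective skew chain of size $n=\max\set{m,n}$ then extended by $\emptyset$ at the bottom and one straight set at the top to reach $\max\set{m,n}+2$; (2) confirm every listed set is a retract via Proposition~\ref{propmain}; (3) prove maximality by showing each consecutive pair is a covering pair in $\Ret G$, which is where I expect to spend the effort, exploiting that meets are intersections and that every retract of the bounded lattice $G$ is a bounded sublattice, so it is squeezed between $f(0)$ and $f(1)$ and must match one of the finitely many shapes catalogued in Proposition~\ref{propmain}.
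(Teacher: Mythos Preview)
Your overall plan matches the paper's: write down two explicit chains using the classification $\Ret G=\sts G\cup\isc G$ from Proposition~\ref{propmain}, then check maximality. Your $m+n$ chain is exactly the paper's $H_2$ (straight sets $\ideal c_i\times\ideal d_j$, growing one coordinate at a time, with $\emptyset$ prepended). The paper does not spell out the covering arguments you worry about; it simply says maximality is ``straightforward'', so you are being more careful than necessary, not less.

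One genuine slip: your first candidate $C=\{(0,0),(1,1),\dots,(m-1,m-1),(m-1,m),\dots,(m-1,n-1)\}$ was already correct. You rejected it because the first coordinate $m-1$ repeats, but recall that ``injective'' in $\isc G$ means left-injective \emph{or} right-injective, not both. Your $C$ is right-injective (the second coordinates $0,\dots,n-1$ are pairwise distinct), skew, and a chain, so $C\in\isc G$ with $|C|=n$. Building $\emptyset\subsetneq\{(0,0)\}\subsetneq\cdots\subsetneq C\subsetneq G$ then gives $n+2$ members, and each step is a cover (consecutive initial segments of $C$ differ by one element; between $C$ and $G$ no straight set fits since $\pi_1(C)=\chain m$, $\pi_2(C)=\chain n$, and no injective chain properly extends $C$). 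The paper's $H_1$ is a variant: it uses the doubly injective diagonal of length $m$ and then climbs through the straight sets $\chain m\times\ideal d_j$ for $j=m-1,\dots,n-1$; this also has $1+m+(n-m+1)=n+2$ members. Either construction works.
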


\begin{figure}[ht]
\centerline
{\includegraphics[width=\textwidth]{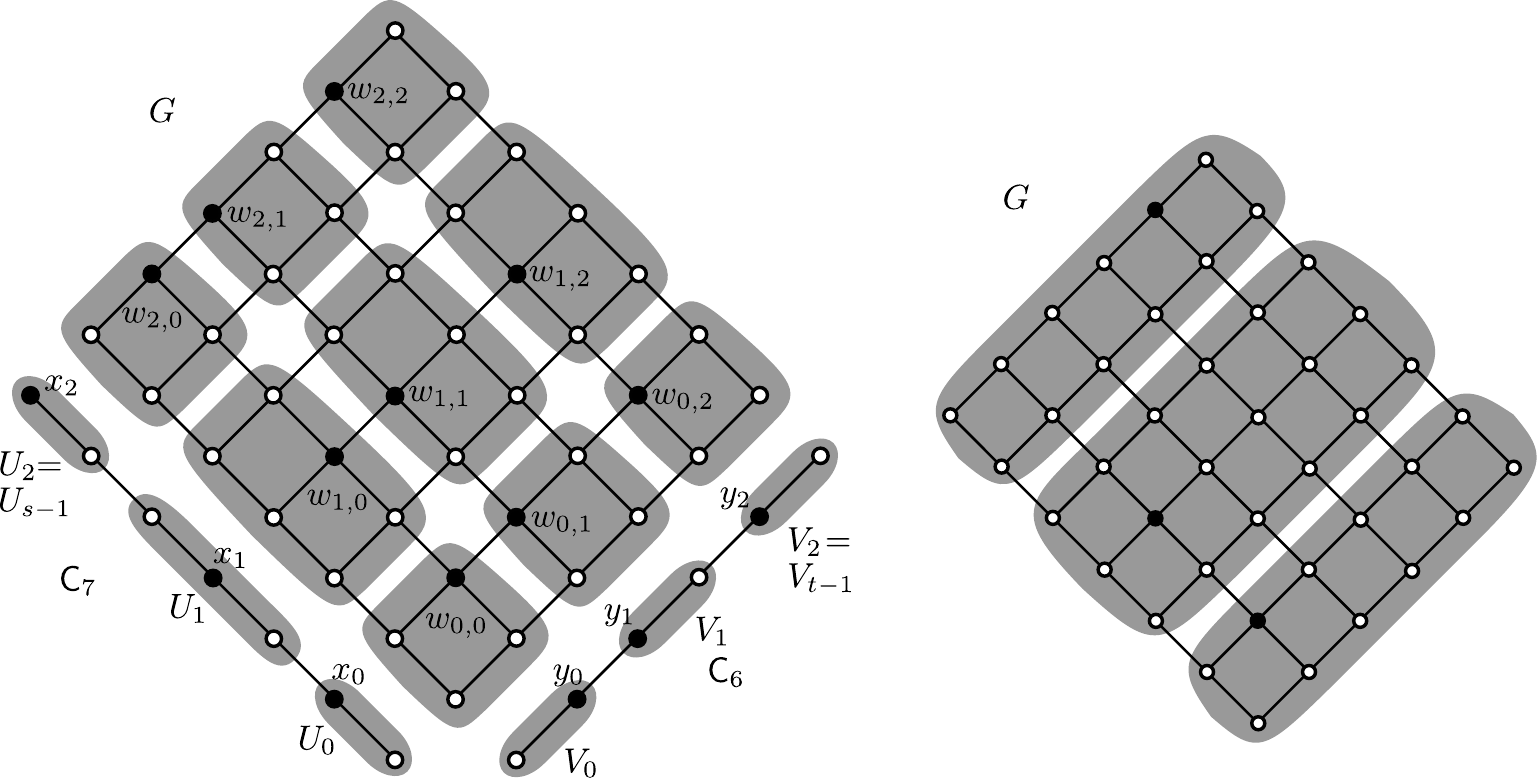}}      
\caption{Illustration to Cases~\ref{case:wWrh} and \ref{case:skwWFrl}}\label{figegy}
\end{figure}

This corollary indicates that the lattice $\Ret{G_{m,n}}$ fails to be distributive in general; in fact, it is not even semimodular or lower semimodular. Note that for a large $n$, $|\isc {G_{n,n}}|$ is much larger than $|\sts {G_{n,n}}|$. For example, computer algebra says that 
\begin{align*}
|\sts {G_{50,50}}|&=1267650600228227149696889520130\approx 1.268\cdot 10^{30},\cr
|\isc {G_{50,50}}|&=17963423287255511675489281668027802959\approx 1.796\cdot 10^{37}\cr
|\Ret {G_{50,50}}|&=17963424554906111903716431364917323089\approx 1.796\cdot 10^{37}.
\end{align*}
Because of space considerations, we only give rounded values for $n=m=1000$:
\begin{align*}
|\sts {G_{1000,1000}}|&  \approx 1.148\,131 \cdot 10^{602},\cr
|\isc {G_{1000,1000}}|&  \approx 7.551\,515  \cdot 10^{763 },\cr
|\Ret{G_{1000,1000}}|& \approx 7.551\,515   \cdot 10^{763 }.\cr
\end{align*} 

The rest of this section is devoted to the proofs of  Theorem~\ref{thmmain}, Proposition~\ref{propmain}, and Corollary~\ref{corolmain}.
The following lemma is trivial; its importance in our proofs justifies that it deserves separate interest. If $\Theta$ is a congruence, then $a/\Theta$ stands for the \emph{$\Theta$-block} $\set{x: (a,x)\in\Theta}$ of $a$.

\begin{observation}\label{obs:drCbngR} Let $A$ be algebra. Then the following hold.

\textup{(A)} A subalgebra $S$ of $A$  is a retract of $A$ if and only if there exists a congruence $\Theta\in\Con A$ such that 
\begin{equation}
\text{for each block $X$ of $\Theta$, we have that  $|X\cap S|=1$.}
\label{eq:stZmsPPswR}
\end{equation}

\textup{(B)} A congruence $\Theta\in\Con A$ is a retraction congruence of $A$ if and only if there exists a subalgebra $S$ of $A$ such that  \eqref{eq:stZmsPPswR} holds.
\end{observation}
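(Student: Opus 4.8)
The plan is to extract both parts from one construction: the passage between a retraction $f$ and the pair $(\ker f,\,f(A))$, together with its inverse, the passage from a pair $(\Theta,S)$ satisfying \eqref{eq:stZmsPPswR} to the map $x\mapsto$ (the unique element of $(x/\Theta)\cap S$).

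For the forward direction of \textup{(A)}, suppose $S=f(A)$ for a retraction $f$, and put $\Theta:=\ker f\in\Con A$. The blocks of $\Theta$ are exactly the fibres $f^{-1}(a)$ with $a\in f(A)=S$. By Remark~\ref{rem:FXjPT}, $f(a)=a$ for every $a\in S$, so $a\in f^{-1}(a)\cap S$; and if $b\in f^{-1}(a)\cap S$, then $b=f(b)=a$ again by Remark~\ref{rem:FXjPT}. Hence each block meets $S$ in a singleton, which is \eqref{eq:stZmsPPswR}. For the converse direction, given $\Theta\in\Con A$ satisfying \eqref{eq:stZmsPPswR}, define $f\colon A\to A$ by letting $f(x)$ be the unique element of $(x/\Theta)\cap S$; this is well defined precisely because of \eqref{eq:stZmsPPswR}. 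I would then check three things: (i) $f$ is idempotent, since $f(x)\in x/\Theta$ gives $f(x)/\Theta=x/\Theta$, so $f(f(x))$ and $f(x)$ are the same unique element of $(x/\Theta)\cap S$; (ii) $f$ is a homomorphism, because for any basic operation $g$ and inputs $a_1,\dots,a_k$ the element $g(f(a_1),\dots,f(a_k))$ lies in $S$ (as $S$ is a subalgebra) and is $\Theta$-related to $g(a_1,\dots,a_k)$ (as $a_i\mathrel\Theta f(a_i)$ and $\Theta$ is a congruence), so by uniqueness it equals $f(g(a_1,\dots,a_k))$; (iii) $f(A)=S$, since $f(x)\in S$ always and $f(s)=s$ for $s\in S$. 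Thus $S=f(A)$ is a retract.

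Part \textup{(B)} is the same bookkeeping read in the other order. For the forward direction, write $\Theta=\ker f$ for a retraction $f$ and take $S:=f(A)$; then \eqref{eq:stZmsPPswR} holds by the computation already carried out in the forward direction of \textup{(A)}. For the converse, given a subalgebra $S$ with \eqref{eq:stZmsPPswR}, build $f$ from $(\Theta,S)$ exactly as above; besides (i)--(iii) one only needs $\ker f=\Theta$, which is immediate: if $x\mathrel\Theta y$ then $x/\Theta=y/\Theta$, so $f(x)=f(y)$; conversely, if $f(x)=f(y)=:s$, then $s\in x/\Theta$ and $s\in y/\Theta$, whence $x\mathrel\Theta s\mathrel\Theta y$. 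Hence $\Theta\in\RCon A$.

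None of the steps is genuinely hard. The only point demanding a little care is that the assignment $x\mapsto(x/\Theta)\cap S$ really returns an element rather than a set — which is exactly what \eqref{eq:stZmsPPswR} guarantees — and the mild bonus is that the single map $f$ attached to $(\Theta,S)$ simultaneously witnesses both the ``retract'' and the ``retraction congruence'' conclusions, so there is nothing to prove twice.
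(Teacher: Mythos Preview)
Your argument is correct and follows essentially the same route as the paper: from a retraction $f$ you take $\Theta=\ker f$ and use Remark~\ref{rem:FXjPT} to verify \eqref{eq:stZmsPPswR}, and conversely you define $f(x)$ as the unique element of $(x/\Theta)\cap S$ and check it is a retraction with the right image and kernel. If anything, you are more explicit than the paper, which simply asserts that this $f$ is a retraction without spelling out the homomorphism and idempotency checks or the verification that $\ker f=\Theta$.
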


\begin{proof} 
To prove (A), assume that $S$ is a retract. Take a retraction $f\colon A\to A$ with $f(A)=S$, and let $\Theta:=\ker f$. For a $\Theta$-block $X$, let $u_X:=f(x_0)$ for some (equivalently, for every) $x_0\in X$. Since $f(u_X)=f(f(x_0))=f(x_0)=u_X$ gives that $(u_X,x_0)\in\Theta$, we have that $u_X\in X$
and $X=\set{y\in A: f(y)=u_X}$. 
By Remark~\ref{rem:FXjPT}, $X\cap S=\set{y\in A: f(y)=u_X\text{ and } f(y)=y}=\set {u_X}$. Hence, \eqref{eq:stZmsPPswR} holds.
Conversely, if \eqref{eq:stZmsPPswR} holds, then $f\colon A\to S$, defined by  the rule $\set{f(x)}=S\cap(x/\Theta)$ is a retraction and $S=f(A)$ is a retract.

To prove (B), let $\Theta\in\Con A$. Assuming that  $\Theta\in\RCon A$, pick a retraction $f\colon A\to A$ with $\ker f=\Theta$, and let $S:=f(A)$. 
Then $S$ is a retract of $A$ and we are in the same situation as after the second sentence of the proof of part (A), whereby 
 \eqref{eq:stZmsPPswR} holds. Conversely, assume that there is a subalgebra $S$ of $A$ such that \eqref{eq:stZmsPPswR} holds.
Then $\Theta$ is the kernel of $f\colon A\to S$, defined by  the rule $\set{f(x)}=S\cap(x/\Theta)$. Since $f$ is a retraction, $\Theta\in\RCon A$, as required. 
\end{proof}

\begin{observation}\label{obs:chain} Let $C$ be a finite chain.
Then every subset of $C$ is a retract, and every congruence of $C$ is a retraction congruence. Also, an equivalence $\Theta$ of $C$ is a congruence if and only if its blocks are intervals.
\end{observation}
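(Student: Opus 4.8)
The plan is to prove Observation~\ref{obs:chain} in three parts, in the order the claims are stated, since each part is essentially elementary and the later parts feed off the earlier characterization of congruences of a chain.

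\textbf{Step 1: characterizing congruences of a finite chain.} I would begin with the last assertion, that an equivalence $\Theta$ of $C$ is a lattice congruence if and only if its blocks are intervals, because the other two claims are cleanest once this is known. For the ``only if'' direction, suppose $a<b<c$ with $(a,c)\in\Theta$; then $b=(a\vee b)\wedge(b\vee c)$ while $a=(a\vee a)\wedge(a\vee c)\mathbin\Theta(a\vee c)\wedge(b\vee c)=b$, wait—more directly: from $(a,c)\in\Theta$ and reflexivity $(b,b)\in\Theta$ we get $(a\vee b,\,c\vee b)=(b,c)\in\Theta$, and likewise $(a\wedge b,\,c\wedge b)=(a,b)\in\Theta$; hence $b\mathbin\Theta a\mathbin\Theta c$, so $b$ lies in the same block. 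Thus every block is convex, i.e.\ an interval of $C$. For the ``if'' direction, suppose every block of $\Theta$ is an interval; I must check compatibility with $\vee$ and $\wedge$. Given $(a,a')\in\Theta$ and $(b,b')\in\Theta$, I want $(a\vee b,\,a'\vee b')\in\Theta$. Since $C$ is a chain, $a\vee b$ and $a'\vee b'$ each equals one of the four endpoints; a short case analysis using that the block of $a$ is the interval $[a,a']$ (say $a\le a'$) and the block of $b$ is an interval shows $a\vee b$ and $a'\vee b'$ both lie in one common block. The cleanest way to organize this: let $x:=a\vee b$ and $y:=a'\vee b'$; WLOG $x\le y$; then $x$ is one of $a,b$ and $y$ is one of $a',b'$, and in each of the (at most four) sub-cases one checks $x\mathbin\Theta y$ using transitivity through the block-intervals. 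The argument for $\wedge$ is dual. This case-chase is routine but is the part that requires the most care to write out cleanly.

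\textbf{Step 2: every subset of $C$ is a retract.} Let $\emptyset\ne S\subseteq C$ (the empty set is handled by convention as it is in $\Ret C$, but as a \emph{retract} we only need nonempty $S$, which are automatically subalgebras of a chain since any nonempty subset of a chain is a sublattice). I would define $\Theta$ on $C$ by declaring two elements equivalent iff they have the same nearest element of $S$ from below, or more precisely partition $C$ into the intervals $\{x\in C: s=\max\{t\in S: t\le x\}\}$ together with the (at most one) initial interval of elements below $\min S$, which we attach to $\min S$'s block. Each block is an interval, so by Step~1 this $\Theta$ is a congruence, and by construction $|X\cap S|=1$ for every block $X$. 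By Observation~\ref{obs:drCbngR}(A) (applied with this $\Theta$), $S$ is a retract of $C$. Since $\{f(x)\}=S\cap(x/\Theta)$ just sends each $x$ to that nearest-below element of $S$, this is the intuitive ``round down to $S$'' retraction.

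\textbf{Step 3: every congruence of $C$ is a retraction congruence.} Let $\Theta\in\Con C$. By Step~1 its blocks are intervals; pick one element from each block—for concreteness the least element of each block—and let $S$ be the resulting set. As a nonempty subset of the chain $C$, $S$ is a sublattice, hence a subalgebra. By construction $|X\cap S|=1$ for every block $X$ of $\Theta$, so \eqref{eq:stZmsPPswR} holds, and Observation~\ref{obs:drCbngR}(B) gives $\Theta\in\RCon C$. This completes the proof.

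\textbf{Expected main obstacle.} None of the three parts is deep; the only place that demands attention is verifying compatibility in the ``if'' direction of Step~1 (equivalently, showing the round-down map of Step~2 is a homomorphism), which is the usual small finite case analysis over a chain. Everything else is a direct appeal to Observation~\ref{obs:drCbngR}. One should also remember to note explicitly that every nonempty subset of a chain is a sublattice, so that ``subalgebra'' causes no friction in Steps~2 and~3.
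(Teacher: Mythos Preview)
Your proposal is correct and essentially parallels the paper, but with one genuine difference worth noting. For the congruence characterization you give a full argument where the paper simply writes ``well known''; and for Step~3 (every congruence is a retraction congruence) you do exactly what the paper does---pick a representative from each block and invoke Observation~\ref{obs:drCbngR}. The real divergence is in Step~2: you exhibit an explicit ``round down to $S$'' partition of $C$ into intervals, one per element of $S$, whereas the paper argues nonconstructively by introducing ``eligible maps'' $g\colon S\to\{\text{intervals of }C\}$ with $s\in g(s)$ and disjoint images, ordering them by pointwise containment, and taking a maximal one (finiteness guarantees existence); maximality then forces the intervals to cover $C$, yielding the required congruence via Observation~\ref{obs:drCbngR}. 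Your construction is more concrete and transparent; the paper's maximality trick avoids the minor bookkeeping about elements below $\min S$ but is less explicit. Either way the content is the same: produce a congruence whose blocks are intervals meeting $S$ in singletons.
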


\begin{proof} The last sentence is well known. If $\Theta\in \Con C$, then pick an element from each of its blocks; the elements chosen form a sublattice, which is a retract by Observation~\ref{obs:drCbngR}. Now let $S$ be a nonempty subset; it is a sublattice. By an eligible
map we mean a function $g$  from $S$ to the set of intervals of $S$ such that $s\in g(s)$ for all $s\in S$ and $g(s)\cap g(t)=\emptyset$ for any two different $s,g\in S$. Letting $g_1\leq g_2\defiff (\forall s\in S)(g_1(s)\subseteq g_2(s))$, the eligible maps form a finite poset. Now if $g$ is a maximal member of this poset, then $\set{g(s): s\in S}$ is a partition, this partition determines a congruence, and applying Observation~\ref{obs:drCbngR} to this congruence, we conclude that $S$ is a retract.
\end{proof}

\begin{proof}[Proof of Theorem~\ref{thmmain} and Proposition~\ref{propmain}]
In their direct product $G=\chain m\times \chain n$,  we make a distinction  between the two chains even if $m=n$; then the notation $\chain m$ denotes the first chain while $\chain n$ stands for the second one.  Our first task is to prove that 
\begin{equation}
\Ret G=\sts G\cup \isc G.
\label{eq:sJhWmknrc}
\end{equation}
To prove the ``$\supseteq$'' inclusion for \eqref{eq:sJhWmknrc}, assume that  $S\in \sts G\cup \isc G$; we need to show that $S$ is a retract. We can assume that $|S|\geq 2$ since otherwise $S$ is trivially a retract. 
First, let $S\in\sts G$, that is, 
$S=S_1\times S_2$ where $S_1\subseteq \chain m$ and $S_2\subseteq \chain n$ are nonempty subsets. Since $S_1$ and $S_2$ are retracts by (the trivial) Observation~\ref{obs:chain}, $S$ is a retract by Observation~\ref{obs:swggKmz}.  
Second, let $S\in\isc G$. Let, say, $S$ be
a left injective skew chain. Then $\pi_1(S)$ is a retract of $\chain m$ by Observation~\ref{obs:chain}, whereby Observation~\ref{obs:drCbngR} allows us to pick a retraction congruence $\Theta_1\in\Con{\chain m}$ such that for each block $X$ of $\Theta_1$, we have that $|X\cap\pi_1(S)|=1$. Let $\Theta_2=\nabla_{\chain n}$, the largest congruence of $\chain n$, and define $\Theta=\Theta_1\times \Theta_2\in\Con G$. Since $\pi_1$ is injective, each block of $\Theta$ has exactly one element of $S$. Hence $S$ is a retract of $G$ by Observation~\ref{obs:drCbngR}, and we have verified the `$\supseteq$'' inclusion for \eqref{eq:sJhWmknrc}.

To prove the converse inclusion, let $S\in\Ret G\setminus\sts G$; 
we have to show that $S\in\isc G$.  Since $S\notin\sts G$, we know that $|S|\geq 2$. Observation~\ref{obs:drCbngR}(A) allows us to pick a congruence $\Theta\in\Con G$ such that for each $\Theta$-block $X$, we have that $|S\cap X|=1$. 
By the Fraser--Horn property, see Lemma~\ref{lemma:FH}(B), there are $\Theta_1\in\Con{\chain m}$ and $\Theta_2\in\Con{\chain n}$ such that $\Theta=\Theta_1\times \Theta_2$.  Clearly, $\Theta_1$ and $\Theta_2$ are uniquely determined by $\Theta$.  There are two cases.

\begin{case}\label{case:wWrh} We assume that $\Theta_1\neq \nabla_{\chain m}$ and  $\Theta_2\neq \nabla_{\chain n}$; see on the left in Figure~\ref{figegy}, where $m=7$,  $n=6$, and $S$ consists of the black-filled elements. Then 
\begin{equation}
\text{$C_1/\Theta_1$ is a non-singleton chain $\set{U_0\prec U_1\prec\dots\prec U_{s-1}}$,}
\label{eq:swrXRs}
\end{equation}
where $U_0,\dots, U_{s-1}$ are the $\Theta_1$-blocks.  Similarly, $C_2/\Theta_2=\set{V_0\prec V_1\prec\dots\prec V_{t-1}}$ where the $V_j$'s are the $\Theta_2$-blocks. Since $\Theta=\Theta_1\times \Theta_2$, the $\Theta$-blocks are the $U_i\times V_j$'s, $i\in\set{0,1,\dots, s-1}$ and 
$j\in\set{0,1,\dots,t-1}$. Let $w_{i,j}$ denote the unique element of $S\cap (U_i\times V_j)$. We claim that, for $i,i'\in\set{0,1,\dots, s-1}$ and $j,j'\in\set{0,1,\dots,t-1}$,
\begin{equation}
w_{i,j} \wedge w_{i',j'}=w_{\min\set{i,i'},\min\set{j,j'}}
\text{ and }
w_{i,j} \vee w_{i',j'}=w_{\max\set{i,i'},\max\set{j,j'}}.
\label{eq:jmkmpSk}
\end{equation}
To verify \eqref{eq:jmkmpSk}, observe that 
$(U_i\times V_j)\wedge(U_{i'}\times V_{j'})$ (computed in $L/\Theta$) contains $w_{i,j} \wedge w_{i',j'}\in S$ and equals
$U_{\min\set{i,i'}}\times V_{\min\set{j,j'}}$. Since this $\Theta$-block only contains one element from $S$, we obtain the first half of \eqref{eq:jmkmpSk}. Hence, \eqref{eq:jmkmpSk} follows by duality.
Since $\chain m$ and $\chain n$ are  chains, it follows from \eqref{eq:swrXRs}, its counterpart for the $V_j$'s, $w_{i,j}\in U_i\times V_j$, $w_{i',j'}\in U_{i'}\times V_{j'}$, and \eqref{eq:jmkmpSk}  that, for $i,i'\in\set{0,1,\dots,s-1}$ and $j,j'\in\set{0,1,\dots, t-1}$, 
\begin{equation}
\text{if $i\leq i'$ and $j\leq j'$, then $\pi_1(w_{i,j})\leq \pi_1(w_{i',j'})$ and $\pi_2(w_{i,j})\leq \pi_2(w_{i',j'})$.}
\label{eq:pjztmpLg}
\end{equation}

Next, let $x_{s-1}:=\pi_1(w_{s-1,0})$, $y_0:=\pi_2(w_{s-1,0})$,
$x_{0}:=\pi_1(w_{0,t-1})$, and $y_{t-1}:=\pi_2(w_{0,t-1})$. Then
$w_{s-1,0}=(x_{s-1},y_0)$ and  $w_{0,t-1}=(x_0,y_{t-1})$. 
We know from \eqref{eq:pjztmpLg} that $x_0\leq x_{s-1}$ and $y_0\leq y_{t-1}$. These inequalities and \eqref{eq:jmkmpSk} give that $w_{0,0}= w_{s-1,0}\wedge w_{0,t-1}=(x_{s-1},y_0)\wedge(x_0,y_{t-1}) = (x_0,y_0)$. 
Hence, $\pi_1(w_{0,0})=x_0=\pi_1(w_{0,t-1})$ and $\pi_2(w_{0,0})=y_0=\pi_2(w_{s-1,0})$. Thus, 
 \eqref{eq:pjztmpLg} gives that $\pi_2(w_{i,0})= y_0$ and $\pi_1(w_{0,j})=x_0$ for all meaningful $i$ and $j$.
Therefore, letting $x_i=\pi_1(w_{i,0})$ and $y_j:=\pi_2(w_{0,j})$, 
\begin{equation}
w_{i,0}=(x_i,y_0)\,\,\text{ and }\,\, w_{0,j}=(x_0,y_j)
\label{eq:hWfnWrm}
\end{equation} 
for $i\in \set{0,\dots,s-1}$ and $j\in\set{0,\dots,t-1}$.
We know from  \eqref{eq:pjztmpLg} that  $x_0\leq x_1\leq\dots\leq x_{s-1}$ and $y_0\leq y_1\leq\dots\leq y_{t-1}$. 
Since $w_{0,0,}$, $w_{1,0}$, \dots, $w_{s-1,0}$ belong to different $\Theta$-blocks, we have that 
\begin{equation}
\text{$x_0<x_1<\dots<x_{s-1}$ and,  similarly, $y_0<\dots<y_{t-1}$.}
\label{eq:sknWhdnZ}
\end{equation}
Let $X:=\set{x_0,\dots,x_{s-1}}$ and $Y:=\set{y_0,\dots,y_{s-1}}$. Combining \eqref{eq:jmkmpSk}, \eqref{eq:hWfnWrm}, and \eqref{eq:sknWhdnZ}, we obtain that, for all $i\in\set{0,\dots,s-1}$ and $j\in\set{0,\dots, t-1}$, 
\begin{equation*}
w_{i,j}=w_{i,0}\vee w_{0,j}=(x_i,y_0)\vee (x_0,y_j)=(x_i,y_j).
\end{equation*}
Therefore, $S=\set{w_{i,j}: 0\leq i<s\text{ and }0\leq j<t}= \set{(x_i,y_j): 0\leq i<s\text{ and }0\leq j<t}= =X\times Y$. This contradicts the assumption that $S\notin \sts G$, whereby 
Case~\ref{case:wWrh} cannot occur. 
\end{case}

\begin{case}\label{case:skwWFrl} We assume that $\Theta_1 = \nabla_{\chain m}$ or $\Theta_2 = \nabla_{\chain n}$. Both equalities cannot simultaneously hold since otherwise $\Theta=\nabla_G$ would contradict that $|S|>1$. Hence, we can assume that $\Theta_1 \neq \nabla_{\chain m}$ but $\Theta_2 = \nabla_{\chain n}$; see on the right in Figure~\ref{figegy}. 
If $x,y\in S$ such that  $\pi_1(x)=\pi_1(y)$, then $(\pi_1(x),\pi_1(y))\in \Theta_1$ and $(\pi_2(x),\pi_2(y))\in\nabla_{\chain n}=\Theta_2$ gives that $(x,y)\in\Theta_1\times \Theta_2=\Theta$, that is, 
$y\in x/\Theta$, whence $x,y\in S\cap x/\Theta$ yields that $x=y$. Therefore, $\restrict{\pi_1}S$ is injective, that is, $S$ is a left injective subset of $G$.  If we had that $|\pi_2(S)|=1$, then $S=\pi_1(S)\times\pi_2(S)\in\sts G$ would contradict our assumption that $S\in\Ret G\setminus\sts G$. Hence, $|\pi_2(S)|>1$. By way of contradiction, we are going to prove that $S$ is a chain. Suppose to the contrary that this is not so, and pick two incomparable elements $x=(x_1,x_2)$ and $y=(y_1,y_2)$ from $S$. The components of $x$ and $y$ belong to chains, whereby $x\parallel y$ is only possible if either $x_1>y_1$ and $x_2<y_2$, or  $x_1<y_1$ and $x_2>y_2$. By symmetry, we can assume the first alternative, that is, $x_1>y_1$ and $x_2<y_2$. Let $z:=x\vee y=(x_1,y_2)$. Since $S$ is a sublattice, $z\in S$.
Since $\pi_1(x)=x_1=\pi_1(z)$,  we have that $(\pi_1(x),\pi_1(z))\in\Theta_1$. We also have that $(\pi_2(x),\pi_2(z))\in\nabla_{\chain m}=\Theta_2$. Thus, $(x,z)\in\Theta_1\times\Theta_2=\Theta$, which gives that $f(x)=f(z)$. Hence, using that $f$ is order-preserving and $y\leq z$, we have that $y=f(y)\leq f(z)=f(x)=x$, contradicting that $x\parallel y$. Therefore, $S$ is a chain, so it is an injective chain belonging to $\isc G$, as required. 
This completes Case~\ref{case:skwWFrl}, and we have obtained the validity of \eqref{eq:sJhWmknrc}. 
\end{case}

By definition, $\sts G$ and $\isc G$ are clearly disjoint, whence \eqref{eq:sJhWmknrc} imply the second sentence of Proposition~\ref{propmain}. 
The first sentence (about chains) of Proposition~\ref{propmain} is included in Observation~\ref{obs:chain}, which has already been proved.

Next, we turn our attention to the theorem. The rule $(X_1\times Y_1)\cap (X_2\times Y_2)=(X_1\cap X_2)\times(Y_1\cap Y_2)$ shows that $\sts G$ is closed with respect to intersection. So if $X,Y\in\sts G$, then $X\cap Y\in \sts G$, whence \eqref{eq:sJhWmknrc} gives that  $X\cap Y\in \Ret G$. 
Now let $X,Y\in \Ret G$ but, say, $X\notin \sts G$. Then $X$ is an injective skew chain; say, it is left injective. 
Since $X\cap Y$ is a subset of $X$,  we obtain that 
$X\cap Y$ is a left injective chain. If it is not a straight subset, then $X\cap Y\in\isc G\subseteq \Ret G$ by \eqref{eq:sJhWmknrc}). 
If $X\cap Y$ is a straight subset, then $X\cap Y\in \sts G\subseteq \Ret G$ again. Therefore, $\Ret G$ is closed with respect to the binary intersection. By finiteness and since $\Ret G$ has a largest member, $G$, we conclude Theorem~\ref{thmmain}.

Next, $\sts G\cap\isc G=\emptyset$ holds by definition, and \eqref{eq:shRw} is clear. To prove the validity of \eqref{propmain}, note that to obtain an $s$-element left injective chain $X=\set{(x_1,y_1),\dots,(x_s,y_s)}$, we need to select  $(x_1,\dots, x_s)$ and $(y_1,\dots, y_s)$ independently such that $x_1<x_2<\dots<x_s$ and $y_1\leq y_2\leq\dots\leq y_s$.
We can do this in ${m\choose s}\cdot{n+s-1\choose s}$ ways
since $x_i\in \chain m$ and $y_i\in \chain n$ for $i\in\set{1,\dots,s}$. This explains the first summand after the big $\sum$ sign in \eqref{eq:wksvmcsmw}. Note that $m\choose s$ is 0, if $s>m$.
Similarly, the next summand is the number of right injective chains. The sum of the first two summands has to be corrected; first with the number of doubly injective skew chains, then with the number of 
left injective straight chains, and with the number of right injective straight chains; this is where the three subtrahends in \eqref{eq:wksvmcsmw} come from. 
(Since $s\geq 2$, the properties ``doubly injective'', ``left injective and straight'', and ``right injective and straight'' of chains mutually exclude each other.)
Therefore,  \eqref{eq:wksvmcsmw} holds. 
We have proved Theorem~\ref{thmmain} and Proposition~\ref{propmain}.
\end{proof}

\begin{proof}[Proof of Corollary~\ref{corolmain}]
We use the notation $\chain m=\set{0=c_0\prec c_1\prec\dots\prec c_{m-1}=1}$
and $\chain n=\set{0=d_0\prec d_1\prec\dots\prec d_{n-1}=1}$. 
The principal ideals $\ideal c_i$ and $\ideal d_j$ are understood in $\chain m$ and $\chain n$, respectively.
Without loss of generality, we can assume that $m\leq n$. Take the following two chains in $\Ret G=\Ret{\chain m\times \chain n}$:
\begin{align*}
H_1:=\Bigl\{\emptyset,\, &\set{(c_0,d_0)},\,\set{(c_0,d_0),(c_1,d_1)},\,\dots, \, \set{(c_0,d_0),\dots,(c_{m-1},d_{m-1})},\,\cr
& \chain m\times\ideal d_{m-1},\, \chain m\times\ideal d_{m},\,\dots, \chain m\times\ideal d_{n-1}\Bigr \}\quad\text {and} 
\label{eq:Hdgng}\\
H_2:=\Bigl\{&\emptyset,\,\ideal c_0\times\ideal d_0,\,c_1\times\ideal d_0,\,\dots,\,
\ideal c_{m-1}\times\ideal d_0 = \chain m \times \set{d_0},\, \cr
  &\chain m\times \ideal d_1,\dots, \chain m\times \ideal{d_{n-1}}=\chain m\times \chain n\Bigr\}.
\end{align*}
Based on Proposition~\ref{propmain} or \eqref{eq:sJhWmknrc}, it is straightforward to see that both $H_1$ and $H_2$ are maximal chains in $\Ret G$. Since $|H_1|=n+2=\max\set{m,n}+2$ and $|H_2|=m+n$, we obtain Corollary~\ref{corolmain}.
\end{proof}

\section{Some easy facts}
This section collects some easy facts about retracts and related concepts. Some other facts are given in Cz\'edli~\cite{czg-retrsps} and other sections of the present paper.
Recall that an algebra $P$ in a variety $\mathcal V$ is \emph{projective} in $\mathcal V$ if for any algebras $A,B\in \mathcal V$, any  homomorphism $p\colon P\to B$ and any surjective homomorphism $g\colon A\to B$, there is a homomorphism $h\colon P\to A$ such that $p=g\circ h$. In lack of diagonal arrows, this is visualized by the following commutative diagram:
\begin{equation*}
\begin{CD}
A  @>{\text{surjective }{g}}>> B  \\
@A{\exists h}AA   @A p AA   \\
P @<\id P<<  P
\end{CD}
\end{equation*}
The standard category theoretic approach would be to require that $g$ is an epimorphism.
Although there are varieties in which epimorphisms need not be surjective, we go after, say, Freese an Nation~\cite{freesenation} and require $g$ to be surjective rather than just stipulating that $g$ is an epimorphism.  The connection between retracts and projective algebras is well known, say,  from Freese an Nation~\cite{freesenation}. Below, we enlighten another aspect of this connection.

\begin{observation}\label{obs:projective}
If $\Theta$ is a congruence of an algebra $A$ such that $A/\Theta$ is projective in the variety generated by $A$, then $\Theta$ is a retraction congruence.
\end{observation}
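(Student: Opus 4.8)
The plan is to exhibit a retraction of $A$ whose kernel is exactly $\Theta$, by invoking projectivity of the quotient $A/\Theta$ against the natural surjection $A\to A/\Theta$.

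First I would let $\nu\colon A\to A/\Theta$ denote the canonical surjective homomorphism, so that $\ker\nu=\Theta$. Since $A/\Theta$ is projective in the variety $\mathcal V$ generated by $A$ (note $A,\,A/\Theta\in\mathcal V$), I apply the defining property of projectivity with $P:=A/\Theta$, $B:=A/\Theta$, $g:=\nu\colon A\to A/\Theta$, and $p:=\id{A/\Theta}\colon A/\Theta\to A/\Theta$. This yields a homomorphism $h\colon A/\Theta\to A$ with $\nu\circ h=\id{A/\Theta}$; that is, $h$ is a section of $\nu$.

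Next I would set $f:=h\circ\nu\colon A\to A$ and check that $f$ is a retraction with $\ker f=\Theta$. It is a composite of homomorphisms, hence an endomorphism. Idempotency follows from $f\circ f=h\circ(\nu\circ h)\circ\nu=h\circ\id{A/\Theta}\circ\nu=h\circ\nu=f$. For the kernel: $f(x)=f(y)$ means $h(\nu(x))=h(\nu(y))$; applying $\nu$ and using $\nu\circ h=\id{A/\Theta}$ gives $\nu(x)=\nu(y)$, i.e.\ $(x,y)\in\Theta$; conversely $(x,y)\in\Theta$ gives $\nu(x)=\nu(y)$, hence $f(x)=f(y)$. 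Thus $\ker f=\Theta$, so $\Theta$ is a retraction congruence.

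There is essentially no obstacle here beyond correctly matching the data to the projectivity diagram; the only point worth a moment's care is to confirm that both $A/\Theta$ and $A$ lie in the variety generated by $A$, so that projectivity of $A/\Theta$ in that variety is applicable to the surjection $g=\nu$. Everything else is a routine diagram chase, and the argument does not use any majority term or the Fraser--Horn property.
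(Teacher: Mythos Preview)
Your proof is correct and follows essentially the same approach as the paper: both use projectivity of $A/\Theta$ to lift $\id{A/\Theta}$ along the natural surjection and obtain a section $h\colon A/\Theta\to A$. The only cosmetic difference is that the paper observes $h(X)\in X$ for each $\Theta$-block $X$, so that the image of $h$ is a subalgebra meeting every block in exactly one point, and then invokes Observation~\ref{obs:drCbngR}(B); you instead write down the retraction $f=h\circ\nu$ directly and verify idempotency and $\ker f=\Theta$, which amounts to unfolding that observation.
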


\begin{proof} Let $g\colon A\to A/\Theta$ be the natural homomorphism defined by $u\mapsto u/\Theta$; it is surjective.  Let $p$ be the identity map $\id{A/\Theta}\colon A/\Theta\to A/\Theta$. 
Since $A/\Theta$ is projective, there is a homomorphism
$h\colon A/\Theta\to A$ such that $\id{A/\Theta}=g\circ h$.
Now if $X$ is a $\Theta$-block, that is,  $X\in A/\Theta$, 
then $h(X)\in X$ since $X=\id{A/\Theta}(X)=g(h(X))=h(X)/\Theta$. Furthermore, $\set{h(X):X\in A/\Theta}$ is a subalgebra of $A$. Hence, $\Theta\in\RCon A$ by Observation~\ref{obs:drCbngR}(B).
\end{proof}

\begin{figure}[ht]
\centerline
{\includegraphics[scale=0.93]{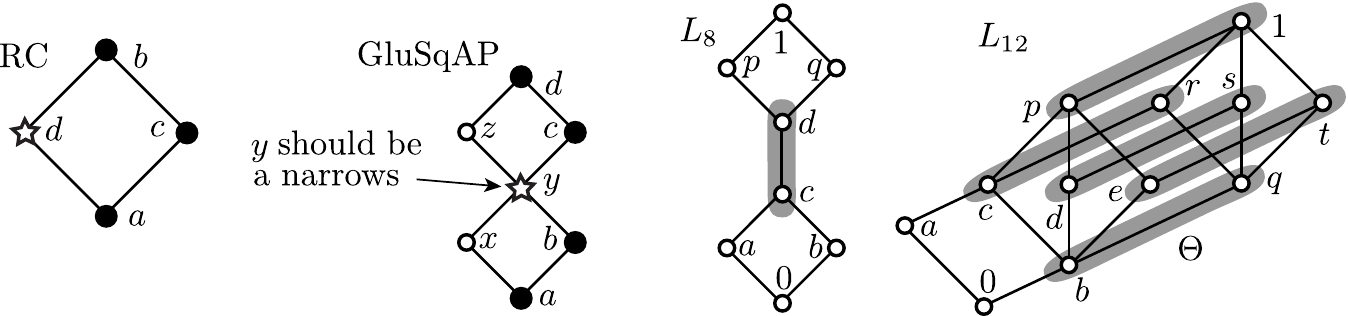}}      
\caption{RC, GluSqAP, $L_8$, and $L_{12}$}\label{figketto}
\end{figure}

If $\abul $ and $\xstar$ are subsets of a lattice $K$, and $\Gamma$ is a property of possible embeddings with domain $K$, then the retracts of a lattice $L$ satisfy the \emph{extended absorption property} $\eabp(K,\abul ,\xstar,\Gamma)$ if for every retract $S$ of $L$ and every embedding $g\colon K\to L$ such that $g$ satisfies $\Gamma$ and  $g(\abul )\subseteq S$, we have that $g(\xstar)\subseteq S$. 
If $\Gamma$ automatically holds for any embedding, then we omit it from the notation and we obtain the \emph{absorption property} $\red{\abp}(K,\abul ,\xstar)$ introduced in Cz\'edli~\cite{czg-retrsps}.
On the left and in the middle of Figure \ref{figketto}, the elements of $\abul$ and $\xstar$ are black-filled  and star-shaped, respectively. The property $\Gamma$, if relevant, is written in the figure.

The simplest absorption property is given on the left of Figure \ref{figketto}; we also denote it by RC. Sublattices satisfying RC are said to be sublattices \emph{closed with respect to taking \tbf{\underline{\underline r}}elative \tbf{\underline{\underline c}}omplements}. 

\begin{observation}\label{obs:RC}
Every retract of a distributive lattice is closed with respect to taking relative complements, that is, the retracts of a distributive lattice satisfy RC.
\end{observation}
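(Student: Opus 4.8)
The plan is to unwind the definition of RC and reduce the claim to the uniqueness of relative complements in a distributive lattice. Recall that RC is the property $\abp(K,\abul,\xstar)$ in which $K=\chain 2\times\chain 2$ is the four-element Boolean lattice, say $K=\set{0,p,q,1}$ with $p\wedge q=0$, $p\vee q=1$ and $p\parallel q$, while $\abul=\set{0,p,1}$ and $\xstar=\set q$ (the black-filled and star-shaped elements on the left of Figure~\ref{figketto}). So, given a distributive lattice $L$, a retract $S$ of $L$, and an embedding $g\colon K\to L$ with $g(\abul)\subseteq S$, the task is to prove $g(q)\in S$.

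First I would fix a retraction $f\colon L\to L$ with $f(L)=S$. By Remark~\ref{rem:FXjPT}, every element of $S=f(L)$ is a fixed point of $f$, so $f(g(0))=g(0)$, $f(g(p))=g(p)$ and $f(g(1))=g(1)$. Since $g$ is an embedding, $g(p)\wedge g(q)=g(0)$ and $g(p)\vee g(q)=g(1)$; applying the lattice homomorphism $f$ and using the three fixed points just listed, we get $g(p)\wedge f(g(q))=g(0)$ and $g(p)\vee f(g(q))=g(1)$. Hence both $g(q)$ and $f(g(q))$ are relative complements of $g(p)$ in the interval $[g(0),g(1)]$ of $L$.

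The one place where distributivity is used --- and the only step worth spelling out --- is the classical fact that relative complements in a distributive lattice are unique: in the distributive interval $D:=[g(0),g(1)]$, if $b\wedge c=b\wedge c'=g(0)$ and $b\vee c=b\vee c'=g(1)$, then $c=c\wedge(b\vee c')=(c\wedge b)\vee(c\wedge c')=c\wedge c'\leq c'$, and symmetrically $c'\leq c$, so $c=c'$. Taking $b=g(p)$, $c=g(q)$ and $c'=f(g(q))$ yields $f(g(q))=g(q)$, whence $g(q)=f(g(q))\in f(L)=S$, as required. There is no genuine obstacle here; the whole argument rests on Remark~\ref{rem:FXjPT} together with this uniqueness of relative complements, and it also makes clear why the analogous statement can fail for non-distributive lattices.
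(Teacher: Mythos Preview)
Your argument is correct and is essentially the paper's own proof: both fix a retraction $f$ with $f(L)=S$, use that the three ``bullet'' elements are fixed by $f$, compute that $f$ applied to the star element is again a relative complement of the same element in the same interval, and conclude by the uniqueness of relative complements in a distributive lattice. The only difference is notational (you carry the embedding $g$ explicitly and spell out the uniqueness computation, whereas the paper works directly with elements $a,b,c,d\in L$).
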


\begin{proof}
Let $S$ be a retract of a distributive lattice $L$ and let $f\colon L\to L$ be a retraction with $f(L)=S$. Assume that $a,b,c,d\in L$ form a sublattice isomorphic to the four-element boolean lattice with bottom $a$ and top $b$, and $a,b,c\in S$. Then $f(d)\wedge c=f(d)\wedge f(c)=f(d\wedge c)=f(a)=a$, and we similarly obtain that $f(d)\vee c=b$. Hence, both $d$ and $f(d)$ are complements of $c$ in the interval $[a,b]_L$, which is a distributive lattice.
By the uniqueness of complements in a distributive lattice,  we have that $f(d)=d$, implying that $d\in S$, as required. 
\end{proof}

If $x$ is an element of a lattice $L$, $x\neq 0_L$, $x\neq 1_L$, and $x$ is comparable with every element of $L$, then $x$ is called a \emph{narrows} (of $L$). If we form the glued sum of two squares (i.e., four-element boolean lattices) to obtain a seven-element lattice $K$, then the middle element $y$ of $K$ is a narrows of $K$. However, $y$ need not remain a narrows if we embed $K$ into another lattice. The condition $\Gamma$ on the embedding $g$ we consider in $\textup{GluSqAP}:=\eabp(K,\abul ,\xstar,\Gamma)$ given by Figure~\ref{figketto} is that $g(y)$ should be a narrows.
(The acronym comes from Glued Squares extended Absorption Property.)

\begin{observation}\label{obs:Glu}
The retracts of every lattice satisfy the extended absorption property $\textup{GluSqAP}$.
\end{observation}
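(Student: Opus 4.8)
The plan is to fix a lattice $L$, a retract $S$ of $L$, a retraction $f\colon L\to L$ with $f(L)=S$, and an embedding $g\colon K\to L$ satisfying the condition $\Gamma$ and with $g(\abul)\subseteq S$, and then to prove that $f$ fixes every element of $g(\xstar)$; by Remark~\ref{rem:FXjPT} this is equivalent to $g(\xstar)\subseteq S$. Recall that $K$ is the glued sum of two squares, with narrows $y$; label its elements so that $p\le q_1,q_2\le y\le r_1,r_2\le z$, where $\{p,q_1,q_2,y\}$ and $\{y,r_1,r_2,z\}$ are the two squares. According to Figure~\ref{figketto}, $\abul$ contains $p$, $z$, $y$ and one side of each square, $\xstar=\{q_2,r_2\}$ consists of the two remaining sides, and $\Gamma$ requires that $n:=g(y)$ be a narrows of $L$. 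The argument does not depend on which side of each square is the marked one, and the order-dual relabelling that interchanges the two squares carries $q_2$ to $r_2$; hence it suffices to prove $f(g(q_2))=g(q_2)$. Note that $g(y)\in g(\abul)\subseteq S$ already gives $f(n)=n$.

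The next step localises at the narrows. Since $n$ is a narrows, $L=\ideal n\cup\filter n$, the two sublattices meeting only in $n$. As $f$ is order-preserving and $f(n)=n$, we get $f(\ideal n)\subseteq\ideal n$ and $f(\filter n)\subseteq\filter n$; in particular $\restrict f{\ideal n}$ is a retraction of the lattice $\ideal n$, with range $S\cap\ideal n$. Inside $\ideal n$ the elements $g(p),g(q_1),g(q_2),n$ form a four-element boolean sublattice with least element $g(p)$ and greatest element $n$, and $g(p),g(q_1),n$ lie in $S\cap\ideal n$. Applying $f$ to the identities $g(p)=g(q_1)\wedge g(q_2)$ and $n=g(q_1)\vee g(q_2)$ and using that $f$ is an idempotent endomorphism fixing $g(p),g(q_1),n$, one obtains, exactly as in the proof of Observation~\ref{obs:RC}, that $a':=f(g(q_2))$ lies in the interval $[g(p),n]_L$ and satisfies $a'\wedge g(q_1)=g(p)$ and $a'\vee g(q_1)=n$; so $a'$, just like $g(q_2)$, is a relative complement of $g(q_1)$ in $\ideal n$, and moreover $a'\in S$.

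It remains to upgrade this to $a'=g(q_2)$, and this is the only step that goes beyond bookkeeping. In a distributive lattice relative complements are unique, so one would be finished immediately --- this is precisely how Observation~\ref{obs:RC} ends; but in a general lattice they need not be unique, and here the hypothesis $\Gamma$ must be exploited: being a narrows is what placed us inside the bounded lattice $\ideal n$ with $n$ as top element and, crucially, what forbids any element of $L$ incomparable with $n$. The point to establish is that an idempotent endomorphism of $L$ which fixes $g(p)$, $g(q_1)$ and $n$ cannot send $g(q_2)$ to any relative complement of $g(q_1)$ other than $g(q_2)$. I expect this uniqueness-type argument to be the main obstacle of the proof; once it is in hand, the order-dual argument performed inside $\filter n$ gives $f(g(r_2))=g(r_2)$, whence $g(\xstar)\subseteq S$, as required.
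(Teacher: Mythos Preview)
You have misidentified which elements of $K$ belong to $\abul$ and which to $\xstar$. In the paper's labelling (as is visible from its proof), $\abul=\{a,b,c,d\}$ consists of the bottom $a$, one side $b$ of the lower square, one side $c$ of the upper square, and the top $d$; the narrows $y$ is the \emph{sole} element of $\xstar$, while the remaining two sides of the squares are neither black nor starred. Thus the task is to show $y\in S$ from $a,b,c,d\in S$ --- not, as you assumed, to show the ``other sides'' lie in $S$ given that $y$ already does. Your honest admission that you cannot force $f(g(q_2))=g(q_2)$ in a non-distributive lattice is entirely correct for the problem you posed yourself, but that is not the problem at hand.

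With the correct setup the proof is short and avoids the relative-complement obstacle. Suppose $y\notin S$; then $f(y)\neq y$, and since $y$ is a narrows, $f(y)$ is comparable with $y$. By duality (using the upper square in place of the lower one if $f(y)<y$) we may assume $f(y)>y$. Let $x$ be the side of the lower square opposite $b$, so $x\vee b=y$ and $x\wedge b=a$. If $f(x)\le y$ then $f(y)=f(x\vee b)=f(x)\vee b\le y$, a contradiction; hence $f(x)\not\le y$, and the narrows property forces $f(x)>y$. But then $a=f(a)=f(x\wedge b)=f(x)\wedge b\ge y\wedge b=b$, contradicting $a<b$. The narrows hypothesis is used twice --- to compare $f(y)$ with $y$, and to compare $f(x)$ with $y$ --- and nowhere does one need uniqueness of relative complements.
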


\begin{proof} Suppose to the contrary that a retract $S$ of a lattice $L$ fails to satisfy GluSqAP. Then we can assume that $K$, the lattice in the middle left of Figure~\ref{figketto}, is a sublattice of $L$, $y$ is a narrows of $L$, $\set{a,b,c,d}\subseteq S$, but $y\notin S$. Pick a retraction $f\colon L\to L$ that witnesses that $S$ is a retract. By Remark~\ref{rem:FXjPT}, $f(L)=S=\set{x\in L:f(x)=x}$. Since $y\notin S$, $f(y)\neq y$. But $f(y)$ is comparable with $y$ since $y$ is a narrows. Hence, $f(y)>y$ or $f(y)<y$. By duality, we can assume that $y<f(y)$. If we had that $f(x)\leq y$, then $f(y)=f(x\vee b)=f(x)\vee f(b)=f(x)\vee b\leq y$ would contradict the just-assumed $y<f(y)$. Hence, 
$f(x)\not\leq y$. Using again that $y$ is a narrows, we have that $f(x)>y$. Then $a=f(a)=f(x\wedge b)=f(x)\wedge f(b)=f(x)\wedge b\geq y\wedge b=b$ is a contradiction completing the proof.
\end{proof}

\begin{figure}[ht]
\centerline
{\includegraphics[scale=0.93]{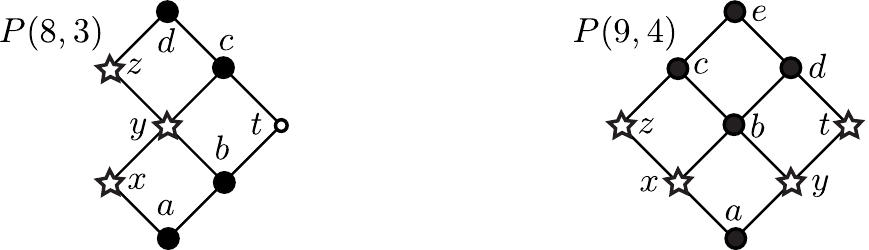}}      
\caption{The absorption properties occurring in Observation~\ref{obs:abspsps}}\label{figharom}
\end{figure}

\begin{observation}\label{obs:abspsps}
The retracts of \emph{planar} distributive lattices satisfy the absorption properties $P(8,3)$ and $P(9,4)$ given in Figure~\ref{figharom} and the dual of  $P(9,4)$.
\end{observation}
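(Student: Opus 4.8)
The statement to prove is Observation~\ref{obs:abspsps}: the retracts of planar distributive lattices satisfy the absorption properties $P(8,3)$, $P(9,4)$, and the dual of $P(9,4)$ from Figure~\ref{figharom}. Since I cannot see the figure, I will describe the general strategy that works for such small absorption properties; the author's proof will presumably spell out the specific configurations. The plan is to argue, in each case, by contradiction in the style of the proofs of Observation~\ref{obs:RC} and Observation~\ref{obs:Glu}: assume a retract $S$ of a planar distributive lattice $L$ contains the embedded image of the black-filled elements $\abul$ of the pattern $K$ but misses some star-shaped element $x^\star$, fix a retraction $f\colon L\to L$ with $f(L)=S$, and derive a contradiction from the facts that $f$ is an idempotent lattice endomorphism and that, by Remark~\ref{rem:FXjPT}, $S=\set{z\in L: f(z)=z}$, so $f(x^\star)\neq x^\star$.

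First I would recall the two structural tools already available. Observation~\ref{obs:RC} tells us that $S$ is closed under relative complements; this is the workhorse, since in a small distributive pattern most of the star-shaped elements can be exhibited as the unique relative complement of an element of $\abul$ in an interval $[a,b]_L$ whose endpoints lie in $S$ (and are themselves relative complements of black elements, etc.), so a short induction on the "layers'' of the pattern forces $x^\star\in S$. Observation~\ref{obs:Glu} handles the situation where the missing element $x^\star$ sits at a narrows of the relevant sublattice, using that $f$ is order-preserving to sandwich $f(x^\star)$. For $P(8,3)$ and $P(9,4)$ I expect a combination: first use RC repeatedly to pull as many elements as possible into $S$, then, for the last stubborn element, invoke either a narrows-type argument or one more relative-complement computation $f(x^\star)\wedge c = f(x^\star\wedge c)=f(a)=a$ and dually $f(x^\star)\vee c=b$, concluding $f(x^\star)=x^\star$ by uniqueness of complements in the distributive interval $[a,b]_L$.

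The role of \emph{planarity} must be to guarantee that the ambient configuration around the embedded copy of $K$ is tame enough that the needed meets and joins land where the pattern says they should — equivalently, that no "extra'' element of $L$ can break the relative-complement chain. Concretely, I would use a planarity fact of the form: in a planar lattice, an element comparable with the two "sides'' of a covering square is forced into the square, or the left–right boundary chains behave predictably; this is exactly what lets one promote the weak hypothesis "$g(\abul)\subseteq S$'' to control of the intermediate elements. So the key steps, in order, are: (1) set up the contradiction hypothesis and apply Remark~\ref{rem:FXjPT}; (2) apply Observation~\ref{obs:RC} as often as the pattern allows to force intermediate elements into $S$; (3) use planarity to show the remaining star-shaped element(s) are either at a narrows of a suitable sublattice or are a relative complement of an element already in $S$; (4) finish with Observation~\ref{obs:Glu} or with the uniqueness-of-complements computation as in Observation~\ref{obs:RC}; (5) repeat for $P(9,4)$ and note the dual case follows by the order-dual argument since "planar distributive lattice'' and "retract'' are self-dual notions.

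**Main obstacle.** The hard part is step (3): extracting from planarity precisely the comparability/containment statement needed, and verifying it is robust under the embedding $g$ — i.e., that the relevant elements of the pattern, which are narrows or relative complements \emph{inside $K$}, remain so (or remain controllable) \emph{inside $L$}. Unlike GluSqAP, here there is no hypothesis $\Gamma$ forcing a narrows to stay a narrows, so planarity of $L$ must be doing that work, and pinning down the exact lemma — likely that a planar distributive lattice embeds into a grid $\chain m\times\chain n$, or that its left/right boundary chains split it appropriately — and checking it suffices for both $P(8,3)$ and $P(9,4)$ is where the real content lies. Everything else is the now-routine "chase $f$ through a distributive diagram'' computation.
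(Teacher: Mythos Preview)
Your plan is sensible in spirit but diverges substantially from the paper's actual argument, and the part you flag as the ``main obstacle'' is precisely the part the paper does \emph{not} prove here. The paper's proof is almost entirely a citation: it introduces weakened properties $P'(8,3)$ and $P'(9,4)$ (obtained from $P(8,3)$ and $P(9,4)$ by dropping some of the star-shaped elements from $X^\star$), invokes the companion paper \cite{czg-retrsps} to assert that retracts of lattices in a certain class --- one that contains all planar distributive lattices --- satisfy $P'(8,3)$ and $P'(9,4)$, and then applies Observation~\ref{obs:RC} once to recover the deleted star-elements as relative complements. The dual of $P(9,4)$ is dispatched by noting that the class of planar distributive lattices is self-dual. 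So the entire role of planarity, together with the narrows-type or boundary-chain reasoning you anticipate in step~(3), is packaged into the external reference; nothing of that sort is carried out in this paper.

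What you got right: the use of RC to upgrade a partial absorption property to the full one is exactly what the paper does in the last step, and your self-duality remark for the dual of $P(9,4)$ matches the paper's. What is missing: your steps (2)--(4) amount to an outline for reproving the cited result from scratch, and you correctly identify that you do not yet have the key planarity lemma. As written, then, your proposal is not a proof but a plausible roadmap whose hard step is left open; the paper sidesteps that step by citation rather than by the direct ``chase $f$ through the diagram'' argument you sketch. If you want a self-contained proof, you would indeed need to supply the content of \cite{czg-retrsps} for $P'(8,3)$ and $P'(9,4)$; neither Observation~\ref{obs:Glu} nor a grid-embedding alone will suffice without that.
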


\begin{proof} Let $P'(8,3)$ and $P'(9,4)$ be the absorption properties that we obtain from $P(8,3)$ and $P(9,4)$ by
omitting $x$ and $z$ from the $\xstar$ component of $P(8,3)$ and omitting $z$ and $t$ from the $\xstar$ component of $P(9,4)$, respectively.  Visually, to obtain the figure for $P'(8,3)$ from that of $P(8,3)$ we replace the $\star$ by $\circ$ at $x$ and $z$, and analogously for $P'(9,4)$. 
It is proved in Cz\'edli~\cite{czg-retrsps} that the retracts of lattices belonging to a class satisfy $P'(8,3)$ and $P'(9,4)$. 
The class considered there \red{contains} all planar distributive lattices, whereby the retracts of planar distributive lattices satisfy $P'(8,3)$ and $P'(9,4)$. Applying Observation~\ref{obs:RC}, we obtain that they satisfy $P(8,3)$ and $P(9,4)$. This implies Observation~\ref{obs:abspsps} since the concept of a planar distributive lattices is selfdual.
\end{proof}

\section{Examples}

\begin{example}\label{exltkr}
For the lattice $L_{12}$ given in Figure~\ref{figketto}, 
$\Ret{L_{12}}=(\Ret{L_{12}},\subseteq)$ is not a lattice.
\end{example}

\begin{proof} Observe that $[0,a]=\set{0,a}\notin\Ret{L_{12}}$.
Suppose the contrary and take a retraction $f\colon L_{12}\to L_{12}$ such that $f(L_{12})=\set{0,a}$. Then two elements of the "diamond" $[b,p]$ are collapsed by $\ker f$. Since the diamond is a simple lattice, $\ker f$ collapses $b$ and $p$. Hence, 
$a=a\wedge p=f(a\wedge p)=f(a)\wedge f(p)=f(a)\wedge f(b)=f(a\wedge b)=f(0)=0$, which is a contradiction. Thus, $\set{0,a}\notin\Ret{L_{12}}$.  

Let $S_1:=[0,p]$ and $S_2:=[0,a]\cup [q,1]$. 
Both are retracts with the same retraction congruence, the non-singleton \red{blocks} of which are given by the grey ovals. We claim that $\set{S_1,S_2}$ has no greatest lower bound in $\Ret{L_{12}}$. Since any lower bound is a subset of $S_1\cap S_2=\set{0,a}$ but $\set{0,a}\notin\Ret{L_{12}}$, there are at most three lower bounds, $\emptyset$, $\set 0$, and $\set a$. They are retracts, whence there are exactly three lower bounds, $\emptyset$,  $\set 0$ and $\set a$. Since none of these three sets is larger than the other two, $S_1\wedge S_2$ does not exist in $\Ret{L_{12}}$, whereby $\Ret{L_{12}}$ is not a lattice.
\end{proof}

\begin{remark}\label{rem:shtWlsk} 
$\RCon{L_{12}} = \Con{L_{12}}$, and it is the eight-element boolean lattice. 
\end{remark}

\begin{proof} Since the congruence lattice of a finite modular lattice is boolean by Gr\"atzer \cite[Theorem 357]{ggglt},  $\Con{L_{12}}$ is a boolean lattice.
The atoms in $\Con{L_{12}}$ are the principal congruences 
$\con(0,a)$, $\con(0,b)$, and $\con(b,q)$, whereby $|\Con{L_{12}}|=8$ and it is easy to list the congruences of $L_{12}$. For each congruence $\Psi\neq \nabla_{L_{12}}$, there are two easy ways to conclude that $\Psi\in\RCon{L_{12}}$. First, we can easily give a retraction with kernel $\Psi$. 
Second, we can use the criterion given by Balbes~\cite{balbes} to see that $L_{12}/\Psi$ is projective in the variety of distributive lattices, and then $\Psi\in\RCon{L_{12}}$ follows from Observation~\ref{obs:projective}.
\end{proof}

Related to Remark~\ref{rem:shtWlsk}, there is another one.

\begin{remark}\label{rem:szmknSsK}
If $G$ is a the direct product of two finite chains, then 
$\RCon{G} = \Con{G}$, and it is a boolean lattice.
\end{remark}

\begin{proof}
Combine  Proposition~\ref{prop:wFwzrvTzn} and Observation \ref{obs:chain}, and use the well-known fact that the congruence lattice of a finite modular lattice is boolean; see Gr\"atzer~\cite[Theorem 357]{ggglt}.
\end{proof}

Although we do not know  whether $\RCon L$ is always a lattice or when it is a lattice, we can point out that the situation is usually different from what  Remarks~\ref{rem:shtWlsk} and \ref{rem:szmknSsK} describe. Namely, we have the following example. 

\begin{example}\label{ex:nmnDszWhL} For $L_8$ given in Figure~\ref{figketto}, $\RCon{L_8}\neq \Con{L_8}$,  and $\RCon{L_8}$ is a non-distributive lattice.
\end{example}

\begin{proof}Let $\Theta=\con(c,d)$ be the principal congruence indicated in the figure. Except for $\set{c,d}$, its blocks are singletons. Hence, 
$L_8\setminus \set c$ and $L_8\setminus \set d$ are the only candidates for $S$ in Observation~\ref{obs:drCbngR}(B) but none of them is a sublattice. Thus,  $\Theta\notin\RCon{L_8}$,
witnessing that  $\RCon{L_8}\neq \Con{L_8}$. 
Applying  Gr\"atzer \cite[Theorem 357]{ggglt}, it is easy to see that  $\Con{L_8}$ is a boolean lattice consisting of  32 elements. Using Observation~\ref{obs:projective} and  the criterion of  Balbes \cite{balbes}, it is not hard to see that all other congruences are retraction congruences. That is, $\RCon{L_8}=\Con{L_8}\setminus\set{\con(c,d)}$. Hence, $\RCon{L_8}$ is obtained from a finite boolean lattice by omitting an atom.  By the Duality Principle, it suffices to show that
\begin{equation}
\parbox{9cm}{if $d$ is a coatom of a boolean lattice $K$ with $|K|\geq 8$, then the subposet $(K\setminus\set d,\leq)$ is not a distributive lattice.}
\label{eq:pbx:wWh}
\end{equation}
Indeed, the  join-irreducible elements (that is, the elements with exactly one lower cover) are the same in $K$ and 
$K\setminus\set d$, and these elements are antichains in both cases.  If $K\setminus\set d$ was a distributive lattice, then 
the structure theorem of finite distributive lattices, see Gr\"atzer~\cite[Theorem 107]{ggglt}, would give that $K$ and  $K\setminus\set d$  are isomorphic, which is not the case since 
 $K\setminus\set d$ has one element less than $K$.
\end{proof}

\begin{figure}[ht]
\centerline
{\includegraphics[scale=0.93]{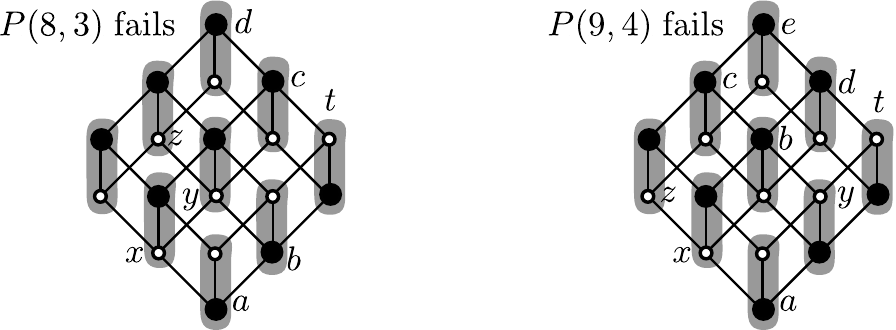}}      
\caption{Where $P(8,3)$ and $P(9,4)$ fail}\label{fignegy}
\end{figure}

\begin{example}\label{ex:whPflS}
The distributive lattice $L=\chain 3\times\chain3\times \chain 2$ has a retract $S$ that satisfies none of $P(8,3)$, nor $P(9,4)$. Moreover, no matter which nonempty subset of $\xstar$ is taken to replace $\xstar$,
$S$ does not satisfy the weaker absorption property we obtain from  $P(8,3)$ or $P(9,4)$ in this way. In  Figure \ref{fignegy}, $L$ is diagrammed twice; $S$ consists of the black-filled elements. 
\end{example}

\begin{proof} In  Figure \ref{fignegy}, a retraction congruence $\Theta$ is given by the grey-filled ovals. Using $\Theta$,  Observation~\ref{obs:drCbngR}(A) shows that $S$ is indeed a retract of $L$.  The embedding is defined by the labeling.
\end{proof}


\begin{thebibliography}{99}

\bibitem{balbes}  Balbes, R.:  Projective and injective distributive lattices.
 Pacific J. Math. \tbf{21}, 405--420 (1967) 

\bibitem{bezhan} Bezhanishvili, G., Harding, J.; Jibladze, M.:
Canonical extensions, free completely distributive lattices, and complete retracts. Algebra Universalis \tbf{82}:64 (2021)

\bibitem{czg-retrsps} Cz\'edli, G.:  A property of meets in slim semimodular lattices and its application to retracts. \texttt{http://arxiv.org/abs/2112.07594}

\bibitem{czg-slimpatchabsretr} Cz\'edli, G.:  Slim patch lattices as absolute retracts and maximal lattices.   \texttt{http://arxiv.org/abs/2105.12868}

\bibitem{czgmolkhasi}Cz\'edli, G., Molkhasi, G.: Absolute retracts for finite distributive lattices and slim semimodular lattices. Order, submitted (and the reviews are affirmative); \texttt{http://arxiv.org/abs/2105.10604}

\bibitem{fraserhorn} Fraser, G. A., Horn, A.: Congruence relations in direct products. Proc. Amer. Math. Soc. \tbf{26}, 390--394 (1970)


\bibitem{fofanova} Fofanova, T. S.: Retracts of a lattice. (Russian, and English translation) Mat. Zametki \tbf{7}, 687--692 (1970); Math. Notes \tbf{7}, 415--418 (1970) 

\bibitem{freesenation}  Freese, R., Nation, J. B.:  Projective lattices.  Pacific J. Math. \tbf{75},  93--106 (1978)


\bibitem{danicap}  Jakub\'{\i}kov\'a--Studenovsk\'a, D., P\'ocs, J.:  Lattice of retracts of monounary algebras.  Math. Slovaca \tbf{61}, 107--125 (2011)

\bibitem{ouwe-rose} Jenner, M., Jipsen, P., Ouwehand, P., Rose, H.: Absolute retracts as reduced products.  Quaest. Math. \tbf{24}, 129--132 (2001)


\bibitem{jonsson}
J\'onsson, B.: Algebras whose congruence lattices are distributive.  Mathematica Scandinavica \tbf{21}, 110--121 (1968)


\bibitem{ouwehand} Ouwehand, P.: Absolute retracts and essential extensions in congruence modular varieties.  Algebra Universalis \tbf{69}, 201--211 (2013)


\bibitem{schmid} Schmid, J.: Algebraically and existentially closed distributive lattices. Z. Math. Logik Grundlagen Math. \tbf{25}, 525--530 (1979)


\bibitem{czghuhnszabo}  Cz\'edli, G., Huhn, A. P.,  Szab\'o, L.:  On compatible ordering of lattices.  Colloquia Math. Soc. J. Bolyai, 33. Contributions to Lattice Theory, Szeged (Hungary), 1980, 87--99 


\bibitem{czglenkeh} Cz\'edli, G., Lenkehegyi, A.: On classes of ordered algebras and quasiorder distributivity. Acta Sci. Math. (Szeged) \tbf{46}, 41--54 (1983)

\bibitem{czgszabo}Cz\'edli, G., Szab\'o, L.: Quasiorders of lattices versus pairs of congruences. Acta Sci. Math. (Szeged) \tbf{60}, 207--211 (1995)

\bibitem{davey}Davey, B. A.: The product representation theorem for interlaced  pre-bilattices: some historical remarks. Algebra Universalis 70, 403--409 (2013)


\bibitem{ggglt} Gr\"atzer, G.: Lattice Theory: Foundation. Birkh\"auser Verlag, Basel (2011)

\bibitem{gratzerknapp1} Gr\"atzer, G., Knapp, E.: Notes on planar semimodular lattices. I.  Construction.  Acta Sci.\ Math.\ (Szeged) \tbf{73}, 445--462 (2007)

\bibitem{gratzerknapp3} Gr\"atzer, G., Knapp, E.:  Notes on planar semimodular lattices. III. Congruences of rectangular lattices. Acta Sci. Math. (Szeged) \tbf{75}, 29--48 (2009)

  
\end{thebibliography}
\end{document}